\documentclass{amsart}

\usepackage{amsmath,amssymb,amsthm,mathtools}
\usepackage{mathrsfs}
\usepackage{microtype}
\usepackage{enumitem}
\usepackage{tikz}
\usetikzlibrary{arrows.meta}
\usepackage{hyperref}

\newtheorem{maintheorem}{Theorem}

\newtheorem{maincorollary}[maintheorem]{Corollary}
\newtheorem{theorem}{Theorem}[section]
\newtheorem{proposition}[theorem]{Proposition}
\newtheorem{lemma}[theorem]{Lemma}
\newtheorem{corollary}[theorem]{Corollary}
\theoremstyle{definition}
\newtheorem{definition}[theorem]{Definition}
\theoremstyle{remark}
\newtheorem{remark}[theorem]{Remark}

\numberwithin{equation}{section}

\newcommand{\ZZ}{\mathbb{Z}}
\newcommand{\NN}{\mathbb{N}}
\newcommand{\FP}[1]{\mathrm{FP}_{#1}}
\newcommand{\Ft}[1]{\mathrm{F}_{#1}}
\newcommand{\orc}{\mathcal{O}}
\newcommand{\Tr}{{\mathscr{T}}}
\newcommand{\rels}{\mathcal{R}}
\newcommand{\ncl}[1]{\langle\!\langle #1\rangle\!\rangle}
\DeclareMathOperator{\girth}{girth}
\DeclareMathOperator{\id}{id}

\begin{document}

\title[Embedding countable groups in groups of type $\mathrm{FP}_n$]{\boldmath Every countable group embeds in a group of type $\mathrm{FP}_n$}
\author{Laurent Bartholdi}
\address{LB: Institut Camille Jordan, Université de Lyon 1 \textnormal{\itshape and} Section de Mathématiques, Université de Genève}
\email{laurent.bartholdi@gmail.com}
\author{Roman Mikhailov}
\address{RM: St Petersburg State University}

\subjclass[2020]{Primary 20F65, 20J05; Secondary 20E06, 20F05}
\keywords{Embedding theorems, finiteness properties of groups, ascending HNN extensions, complexes of groups}

\begin{abstract}
For every integer $n\ge2$, we prove that every countable group embeds in a group of type $\FP{n}$. We also construct a group of type $\Ft{n}$ containing a copy of every recursively presented group. Consequently, a finitely generated group embeds in a group of type $\Ft{n}$ if and only if it is recursively presented.
This answers questions of Fournier-Facio and Zaremsky, and confirms a suggestion of Gromov.
\end{abstract}

\maketitle

\section{Introduction}

A group is of type $\FP{n}$ if the trivial module $\ZZ$ has a projective resolution over the integral group ring that is finitely generated in degrees at most $n$, and of type $\Ft{n}$ if it has an Eilenberg--Mac\,Lane space with finite $n$-skeleton. Higman~\cite{Hig} proved that a finitely generated group embeds in a finitely presented group if and only if it is recursively presented, and Leary~\cite[Theorem~1.1]{Lea} proved that every countable group embeds in a group of type $\FP{2}$. We extend both embedding results to arbitrary degree $\ge2$:
\begin{maintheorem}\label{thm:A}
Let $n\ge 2$ be an integer. Every countable group embeds in a group of type $\FP{n}$.
\end{maintheorem}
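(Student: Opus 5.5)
We plan to follow Leary's strategy for $\FP{2}$ and push it to higher degrees using branched covers of higher-dimensional complexes, combined with a Higman-type reduction. First, every countable group $G$ embeds in a $2$-generated group, so it is enough to handle finitely generated groups. Leary's method can be summarized as follows. Start with a finite flag complex $L$ that has suitable homology. Form the Bestvina--Brady-type kernel $BB_L$ and its branched-cover variants $G_L(S)$, indexed by subsets $S\subseteq\ZZ$. By varying $S$, one obtains continuum many groups of type $\FP{2}$ over $\ZZ$, and every countable group embeds in one of them after suitable amalgamations. The key feature is that the finiteness length of $G_L(S)$ is governed by the connectivity of $L$ and of its branched covers. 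The homology that is killed or kept, depending on $S$, lives in degree $\dim L$.

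The plan is to take $L$ to be an $n$-dimensional, $(n-1)$-acyclic flag complex that is not simply connected in a controlled way. The natural candidate is a flag triangulation of a suitable acyclic complex with perfect fundamental group. Then Leary's homological argument, via the Bestvina--Brady Morse theory on the branched cover of the cubical complex $X_L$, should show that each $G_L(S)$ is of type $\FP{n}$ over $\ZZ$ for every $S$. The ascending links are copies of $L$ or of branched covers of $L$, and we need all of these to be $(n-1)$-acyclic. Verifying this for the branched covers is the first technical step. For it we would choose $L$ so that its branched covers along a vertex link remain acyclic up to degree $n-1$, for instance by taking $L$ to be a join or suspension of complexes and checking the condition with Mayer--Vietoris.

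Next comes the embedding. We use Leary's encoding: the subset $S$ determines which relators hold in a certain subgroup, which is a free product of infinite cyclic groups modulo relations indexed by $S$. Given a $2$-generated group $H=\langle a,b\mid R\rangle$, we would encode $R$ as a subset $S$ through a Higman-style bijection between words and integers. Then $H$ embeds in a quotient construction that sits inside $G_L(S)$, possibly after forming an HNN extension or amalgam over finitely generated free subgroups. Such an extension preserves type $\FP{n}$, because the edge groups are of type $\Ft{\infty}$ and the vertex groups are $\FP{n}$. So this step reduces to choosing the amalgamation so that the Bass--Serre spectral sequence, or Brown's criterion, applies.

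The main obstacle is the step that upgrades finiteness to $\FP{n}$ while keeping the dependence on $S$. Acyclicity up to degree $n-1$ must hold uniformly for all branched covers. At the same time, the subgroup that encodes $S$ has to be embedded injectively, which requires non-positive curvature of the branched cubical complex so that the relevant free subgroups are undistorted and injective. Our first attempt would be to take $L$ as an $n$-fold join of suitable flag $2$-complexes, so that connectivity adds across the join while the fundamental-group phenomena stay in the first factor, where Leary's original $2$-dimensional argument applies.
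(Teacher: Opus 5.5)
\textbf{There is a genuine gap, in the embedding step, and the proposal puts the difficulty in the wrong place.} Finiteness of $G_L(S)$ is not the obstacle. For a suitable fixed finite flag complex $L$, Leary's groups $G_L(S)$ are already of type $\mathrm{FP}$ for \emph{every} $S$; that is the content of his ``uncountably many groups of type $\mathrm{FP}$''. So higher-dimensional $L$, joins and uniform acyclicity of branched covers buy you nothing. What those groups do not do is contain $H$. They encode $S$ through which of certain words in the generators are trivial, and nothing in the construction places an arbitrary $H$ inside them.

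Your sentence ``$H$ embeds in a quotient construction that sits inside $G_L(S)$, possibly after forming an HNN extension or amalgam over finitely generated free subgroups'' is therefore the whole problem, and as written it cannot work. A group of type $\mathrm{FP}$ has finite cohomological dimension, hence is torsion-free. The fundamental group of a graph of groups with torsion-free vertex groups is again torsion-free, so a group $H$ with torsion never embeds in such an amalgam. To capture $H$ you must put $H$, or a group containing it, into the graph of groups as a vertex group. Then your count ``vertex groups $\FP{n}$, edge groups $\FP{n-1}$'' is unavailable, because that vertex group being $\FP{n}$ is exactly what you are trying to prove. Leary crosses this step with the homological Higman rope trick. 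As the introduction recalls, Fournier-Facio and Zaremsky showed that rope-trick groups have infinite-dimensional third rational homology whenever $H=F/R$ is infinite and $R\ne1$. Encoding the relators of $H$ into $S$ and finishing in Higman's manner therefore gives $\FP{2}$ and never $\FP{3}$.

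\textbf{The missing idea, in the paper's proof.} The paper solves the embedding problem once, in degree $1$ where there is no obstruction, and then raises the finiteness degree without disturbing it. Choose an oracle $\orc$ encoding the relators of $G$, and embed $G$ in a two-generator group $B_{1,\orc}$ that contains every finitely generated $\orc$-recursively presented group (Lemmas~\ref{lem:HNN} and~\ref{lem:initial}). Now suppose $B$ is such a universal group of type $\FP{m}$. The inductive step runs as follows.
\begin{enumerate}
\item Universality gives an embedding $K_\Tr(B)\hookrightarrow B$. Composing with the maps $\iota_v$ gives endomorphisms $\alpha_v$ satisfying the commuting-copy and diagonal relations.
\item Chain lifts are additive on $Z=\ker d_m$ modulo finitely generated submodules (Lemma~\ref{lem:product}).
\item The Fano-plane system writes $e_0$ as a $\pm1$-combination of the rows $e_i+e_j-e_k$ (Proposition~\ref{prop:certificate}). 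Hence a chain lift of $\alpha_0$ maps $Z$ into a finitely generated submodule.
\item $\alpha_0$ is injective, because $K_\Tr(B)$ is the fundamental group of a non-positively curved complex of groups (Lemma~\ref{lem:injective}).
\item Theorem~\ref{thm:raising} makes the ascending HNN extension along $\alpha_0$ of type $\FP{m+1}$. This extension contains $B$ and is again $\orc$-recursively presented, so the induction continues up to $n$.
\end{enumerate}
No branched covers or encoding sets $S$ enter the argument.
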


\begin{maintheorem}\label{thm:B}
Let $n\ge 2$ be an integer. There is a group of type $\Ft{n}$ that contains a copy of every recursively presented group.
\end{maintheorem}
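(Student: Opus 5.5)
Theorem~\ref{thm:B} follows from two reductions. First, by Higman's theorem there is a finitely presented group $U$ containing a copy of every recursively presented group: embed the free product of all recursively presented groups with generators from a fixed countable alphabet, a recursively presented group, into a finitely presented group. So it suffices to prove the following claim:
\begin{quote}
every finitely presented group $H$ embeds in a group $G_n(H)$ of type $\Ft{n}$.
\end{quote}
Applying the claim to $H=U$ gives Theorem~\ref{thm:B}. The converse direction of the stated corollary (finitely generated subgroups of $\Ft{2}$ groups are recursively presented) is classical and not needed here.

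To prove the claim I would proceed by induction on $n$, starting from $\Ft{2}$ (finitely presented), and raise the finiteness degree by one at each step while preserving an embedding of $H$. The natural tool is a complex of groups, or an iterated amalgam/HNN construction, whose development is contractible, so that $\Ft{n}$ can be read off via Brown's criterion. Concretely, given $G$ of type $\Ft{k}$ containing $H$, I would construct $G'\supseteq G$ of type $\Ft{k+1}$. This is where the difficulty lies. Adding relations or amalgamating along finitely generated subgroups only controls $\Ft{k}$ of $G'$ in terms of $\Ft{k-1}$ of the edge groups. Hence the edge and vertex groups must themselves be of type $\Ft{k}$, and the higher homotopy of $K(G,1)$ must be killed, not merely relocated.

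A cleaner route uses the standard fact that every finite complex $X$ with $\pi_1X=H$ can be made aspherical up to dimension $n$. Take a finite $n$-dimensional complex $X$ with $\pi_1 X=H$ (for instance, a finite presentation complex). Then embed $H$ in the fundamental group of a locally CAT(0) or aspherical finite complex. I would try the Kan--Thurston / Baumslag--Dyer--Heller approach in a finite form. Alternatively, I would take a right-angled Coxeter group or Bestvina--Brady type construction in which $H$ sits as a retract-like subgroup and the finiteness properties come from the flag complex. The cleanest candidate is as follows. Realise $X$ (after subdivision, a finite flag complex $L$), and form a group built from $L$ of type $\Ft{n}$ into which $\pi_1 L$ injects. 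For instance, use a Davis-type complex of groups over the cone on $L$ with local groups chosen so that the development is CAT(0); then $\pi_1$ of the link injects by local-to-global injectivity, and the whole group is of type $\Ft{\infty}$. The main obstacle is ensuring injectivity of $H$: links of CAT(0) complexes must be CAT(1), so one first replaces $X$ by a hyperbolised or ascending HNN-modified version that is locally CAT(0) with the same $\pi_1$ up to injection. I would try Gromov's hyperbolisation combined with the keyword hint of ascending HNN extensions. Ascending HNN extensions of finitely generated groups preserve $\Ft{k}$ cleanly, and they can be used to absorb non-finitely-presented kernels. I expect verifying injectivity of $H$ in this final group to be the hardest step.
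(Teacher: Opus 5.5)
Your first reduction is sound and matches the paper's starting point. Higman's theorem, after a Higman--Neumann--Neumann step, gives a finitely presented group $U$ containing every recursively presented group; this is the paper's $U_2$. From there on, however, the proposal contains no proof. The whole content of Theorem~\ref{thm:B} is a step that raises the finiteness degree by one while keeping the embedding, and you only list candidate tools, none of which works as stated.

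\begin{itemize}
\item The ``standard fact'' that a finite complex with $\pi_1X=H$ can be made aspherical through dimension $n$ while staying finite amounts to $H$ being of type $\Ft{n}$. So it is false in general (Stallings' group is finitely presented but not of type $\FP{3}$), and circular here.
\item The Kan--Thurston and Baumslag--Dyer--Heller constructions give groups mapping \emph{onto} $\pi_1X$, not groups containing it.
\item Hyperbolisation, and Davis-type complexes of groups with finite local groups, give groups acting geometrically on CAT(0) spaces. Such groups have solvable word problem, as do all their finitely generated subgroups. So they cannot contain $U$, which contains finitely presented groups with unsolvable word problem.
\item If instead $H$ is one of the local groups, Brown's criterion requires $H$ itself to have the finiteness property you are trying to create.
\item An $\Ft{\infty}$ conclusion would moreover settle the question recorded as open in Remark~\ref{rem:infinity}.
\item An ascending HNN extension of a group of type $\Ft{k}$ is again of type $\Ft{k}$, but nothing in your sketch makes it of type $\Ft{k+1}$.
\end{itemize}

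The missing idea is the paper's degree-raising criterion, Theorem~\ref{thm:raising}. Let $B$ be of type $\FP{m}$, with a finite free partial resolution whose top cycle module is $Z=\ker d_m$. If some chain lift of an injective endomorphism $\varphi$ maps $Z$ into a finitely generated submodule, then the ascending HNN extension along $\varphi$ is of type $\FP{m+1}$. The proof is a mapping cone of $1-T$ on the induced resolution, where $1-T$ is injective on $\ZZ[E/B]$ because $T$ raises height.

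Such endomorphisms come from universality, in three steps.
\begin{itemize}
\item Because $U_m$ contains every finitely presented group and $K_\Tr(U_m)$ is finitely presented, there is an embedding $\kappa\colon K_\Tr(U_m)\to U_m$. Put $\alpha_v=\kappa\circ\iota_v$.
\item Chain lifts of the $\alpha_v$ satisfy $\Phi_k\equiv\Phi_i+\Phi_j$ on $Z$ modulo finitely generated submodules (Lemma~\ref{lem:product}). The integral relation $\nu W=e_0$ of Proposition~\ref{prop:certificate} then forces $\Phi_0(Z)$ into a finitely generated submodule (Corollary~\ref{cor:row}).
\item The girth-$12$ complex of groups of Lemma~\ref{lem:injective} makes $\alpha_0$ injective.
\end{itemize}
The extension adds one generator and finitely many relators, so it stays finitely presented. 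It is therefore of type $\Ft{m+1}$ by Lemma~\ref{lem:Fn}, and it still contains $U_2$.

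This is also why the induction should be run on the universal group itself rather than on an arbitrary finitely presented $H$, as in your reduction. The self-embedding $K_\Tr(B)\hookrightarrow B$ that drives each step is supplied by the universality of $B$.
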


\begin{maincorollary}\label{cor:C}
Let $n\ge2$ be an integer. A countable group embeds in a group of type $\Ft{n}$ if and only if it embeds in a finitely presented group. In particular, a finitely generated group embeds in a group of type $\Ft{n}$ if and only if it is recursively presented.
\end{maincorollary}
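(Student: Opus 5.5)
Corollary~\ref{cor:C} should follow formally from Theorem~\ref{thm:B} together with Higman's embedding theorem, with no new construction. I will prove the first sentence as a chain of equivalences and then specialise to finitely generated groups.

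\emph{Only if.} Let $G$ be countable and suppose $G$ embeds in a group $H$ of type $\Ft{n}$ with $n\ge2$. Then $H$ has an Eilenberg--Mac\,Lane space with finite $2$-skeleton, so $H$ is finitely presented: $\pi_1$ of a $K(H,1)$ equals $\pi_1$ of its $2$-skeleton, and a finite $2$-complex has finitely presented fundamental group. Hence $G$ embeds in the finitely presented group $H$.

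\emph{If.} Suppose $G$ embeds in a finitely presented group $P$. Then $P$ is recursively presented: a finite presentation has a finite, hence recursive, set of relators. By Theorem~\ref{thm:B} there is a group $U$ of type $\Ft{n}$ containing a copy of every recursively presented group, so in particular $P\hookrightarrow U$. Composing the embeddings gives $G\hookrightarrow P\hookrightarrow U$, as required.

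For the second sentence, let $G$ be finitely generated. By Higman's theorem~\cite{Hig}, $G$ embeds in a finitely presented group if and only if it is recursively presented. Combining this with the first sentence gives the claim. Alternatively, one can use the two directions directly. If $G$ is recursively presented, Theorem~\ref{thm:B} embeds $G$ in an $\Ft{n}$ group. Conversely, if $G$ embeds in an $\Ft{n}$ group, that group is finitely presented, so $G$ is a finitely generated subgroup of a finitely presented group. Such a subgroup is recursively presented, because its word problem relative to a finite generating set is recursively enumerable: enumerate the consequences of the finite relators and keep those that are words in the images of the generators. This is the easy direction of Higman's theorem.

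Every step is routine. The only point needing care is that "type $\Ft{n}$" with $n\ge2$ implies finite presentability; I would state this explicitly rather than let it pass silently.
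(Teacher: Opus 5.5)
Your proof is correct and follows essentially the same route as the paper. Finite presentability of $\Ft{n}$ groups for $n\ge2$ gives one direction; Theorem~\ref{thm:B}, applied to the finitely presented (hence recursively presented) group $P$, gives the other; and the finitely generated case is the two directions of Higman's theorem, the easy one by enumerating consequences of the finite relators.
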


Misha Gromov suggested, after a talk of the first-named author at the 2015 meeting ``Growth, symbolic dynamics and combinatorics of words in groups'' in Paris, that perhaps the only obstruction to embedding theorems lies in dimension $2$. Corollary~\ref{cor:C} gives a precise finite-degree version of this suggestion: passing from finite presentability to any fixed higher finiteness degree introduces no additional embedding obstruction.

Theorem~\ref{thm:B} settles the $\Ft{3}$ part of \cite[Question~1.3]{FFZ}, and Theorem~\ref{thm:A} the $\FP{3}$ part of \cite[Question~1.4]{FFZ}; both hold in every finite degree. The corresponding questions for types $\Ft{\infty}$ and $\FP{\infty}$ remain open; see Remark~\ref{rem:infinity}. The target groups in both theorems can be chosen to have two generators; see Corollary~\ref{cor:two-generators}.

\subsection*{The degree-raising criterion}
The theorems of Higman and Leary are both proved with the Higman rope trick~\cite[IV.7.6]{LS}, whose homological version is~\cite[Lemma~2.2]{Lea}. Fournier-Facio and Zaremsky showed that the groups produced by the rope trick have infinite-dimensional third rational homology whenever the embedded group $F/R$ is infinite and $R\ne 1$~\cite[Theorem~B]{FFZ}; in particular such groups are not of type $\FP{3}$. We raise the finiteness degree with ascending HNN extensions instead.

Let $m\ge 1$, let $B$ be a group of type $\FP{m}$, and fix an exact sequence of $\ZZ B$-modules
\begin{equation}\label{eq:resolution}
F_m\xrightarrow{d_m}F_{m-1}\to\cdots\xrightarrow{d_1}F_0=\ZZ B\xrightarrow{\varepsilon}\ZZ\to 0
\end{equation}
in which every $F_i$ is finitely generated and free. Put $Z=\ker d_m$. Then $B$ is of type $\FP{m+1}$ if and only if $Z$ is finitely generated~\cite[VIII.4.3]{Bro}. Every endomorphism $\varphi$ of $B$ has chain lifts $\Phi$ to~\eqref{eq:resolution} (Definition~\ref{def:lift}), and each of them maps $Z$ into itself.

\begin{maintheorem}\label{thm:raising}
Let $m\ge 1$, let $B$ be a group of type $\FP{m}$ and let $\varphi\colon B\to B$ be an injective endomorphism. Suppose that some chain lift $\Phi$ of $\varphi$ satisfies $\Phi(Z)\subseteq M$ for a finitely generated $\ZZ B$-submodule $M\subseteq Z$. Then $B$ embeds in the group
\[
E=\langle B,t\colon t^{-1}bt=\varphi(b),\ b\in B\rangle,
\]
and $E$ is of type $\FP{m+1}$.
\end{maintheorem}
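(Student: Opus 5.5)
The embedding of $B$ into $E$ is standard. $E$ is an ascending HNN extension: its base group is $B$, and the stable letter $t$ conjugates $B$ onto $\varphi(B)$. Since $\varphi$ is injective, this is a genuine HNN extension with associated subgroups $B$ and $\varphi(B)$, and Britton's lemma shows that $B\to E$ is injective. The substance is the finiteness statement. I would prove it by building, from the finitely generated resolution \eqref{eq:resolution} and the chain lift $\Phi$, an explicit partial free resolution of $\ZZ$ over $\ZZ E$. This resolution should consist of finitely generated free modules in degrees $\le m+1$ and be exact in degrees $\le m$. That is the definition of type $\FP{m+1}$, with $\ZZ$ as the augmentation cokernel.

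First step: induce up. Put $P_\ast=\ZZ E\otimes_{\ZZ B}F_\ast$. Since $\ZZ E$ is free as a right $\ZZ B$-module, induction is exact, so $P_\ast$ is exact in degrees $1,\dots,m-1$. Its degree-$m$ cycles are $\ZZ E\otimes_B Z$, and $H_0(P)=\ZZ E\otimes_B\ZZ=\ZZ[E/B]$. Next I define a chain endomorphism $\alpha\colon P_\ast\to P_\ast$ by
\[
\alpha(x\otimes f)=x\otimes f-xt\otimes\Phi(f).
\]
Well-definedness over $\ZZ B$ comes from $bt=t\varphi(b)$ together with the $\varphi$-semilinearity of $\Phi$ from Definition~\ref{def:lift}. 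On $H_0$, $\alpha$ induces the map $\ZZ[E/B]\to\ZZ[E/B]$, $xB\mapsto xB-xtB$. This is the middle map of the standard exact sequence for an HNN extension,
\[
0\to\ZZ[E/B]\xrightarrow{\;1-t\;}\ZZ[E/B]\to\ZZ\to0 .
\]
Its left exactness is exactly where ascendingness and the tree of the HNN extension enter; I would quote it from Bass--Serre theory.

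Second step: form the mapping cone $C_k=P_k\oplus P_{k-1}$ of $\alpha$. Then augment it by one extra free module $Q=\ZZ E\otimes_B\ZZ B^{r}$ in degree $m+1$. Here $Q$ maps onto $\ZZ E\otimes_B M\subseteq \ZZ E\otimes_B Z\subseteq P_m$, using a finite generating set of $M$ of size $r$. All modules in degrees $\le m+1$ are finitely generated free.

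Exactness in degrees $<m$ and at $H_0$ follows from the long exact sequence of the cone. It uses $H_k(P)=0$ for $0<k<m$ and the short exact sequence above; $H_0(C)=\ZZ$ comes from the injectivity of $1-t$.

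At degree $m$, the cone's homology receives the cokernel of $\alpha_\ast$ acting on $H_m(P)=\ZZ E\otimes_B Z$. The key observation: for $z\in Z$,
\[
x\otimes z\equiv xt\otimes\Phi(z)
\]
modulo the image of $\alpha$, and $xt\otimes\Phi(z)$ lies in $\ZZ E\otimes_B M$ by hypothesis. Hence every degree-$m$ cycle of the cone is homologous to something in the image of $Q$. So after adding $Q$, the complex is exact at degree $m$. One checks the remaining part of $H_m(C)$, namely $\ker(\alpha_\ast$ on $H_{m-1})$, vanishes, as above.

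The main obstacle is bookkeeping in this last step. Specifically, one must make sure the cone's degree-$m$ homology is exactly $\operatorname{coker}(\alpha_\ast|_{\ZZ E\otimes Z})$, with no contribution from $H_{m-1}$; the case $m=1$, where $H_{m-1}=H_0$, needs the injectivity of $1-t$. One must also check that the twist conventions in $\alpha$ (left versus right multiplication by $t$, and the semilinearity of $\Phi$) are consistent, so that $\alpha$ really is a $\ZZ E$-chain map.
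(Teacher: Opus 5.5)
Your proposal is correct and follows the paper's proof step for step. You induce to $P_*=\ZZ E\otimes_{\ZZ B}F_*$, take the mapping cone of $1-T$ with $T(a\otimes x)=at\otimes\Phi(x)$, and use $\zeta\equiv T\zeta$ modulo boundaries to push top-degree cycles into the span of the $1\otimes\mu_p$. Your degree-$(m+1)$ module $P_m\oplus Q$ maps onto exactly the paper's two generating families for $\ker D_m$. The remaining differences are cosmetic: you use the long exact sequence of the cone where the paper chases elements, and you quote Bass--Serre theory for exactness of $0\to\ZZ[E/B]\to\ZZ[E/B]\to\ZZ\to0$ where the paper gives a short height argument.
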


The hypothesis does not depend on the choice of the chain lift or of the finite free partial resolution; see Lemma~\ref{lem:comparison} and Remark~\ref{rem:resolution-independence}. It requires neither that $\Phi(Z)$ be finitely generated nor that $M$ be $\Phi$-invariant.

\subsection*{Outline}
A fixed finite system of commuting-copy and diagonal relations produces, in every positive degree, an injective endomorphism whose action on the top cycle module has image contained in a finitely generated submodule. Theorem~\ref{thm:raising} then raises the finiteness degree by one. We describe the three ingredients of this construction. An \emph{oriented triple system} $\Tr$ on a finite set $V$ is a set of triples $(i,j)\to k$ of distinct elements of $V$, with \emph{inputs} $i,j$ and \emph{output} $k$ (Definition~\ref{def:triples}). For a group $G$, let $G_v$ ($v\in V$) be copies of $G$, write $g_v\in G_v$ for the copy of $g\in G$, and put
\begin{equation}\label{eq:KSigma}
K_\Tr(G)=\bigl(\mathop{\ast}_{v\in V}G_v\bigr)\big/\ncl{\,[g_i,h_j],\ g_k^{-1}g_ig_j\ \colon\ g,h\in G,\ (i,j)\to k\in\Tr\,},
\end{equation}
with canonical homomorphisms $\iota_v\colon G\to K_\Tr(G)$, $g\mapsto g_v$. Thus $K_\Tr(G)$ is generated by the copies $G_v$ subject to the following relations, one family for each triple $(i,j)\to k$: the input copies $G_i$ and $G_j$ commute elementwise, and $g_k=g_ig_j$ for every $g\in G$, so that the output copy is identified with the diagonal in the input copies. For a single triple, $K_\Tr(G)=G\times G$, and $G_i$, $G_j$, $G_k$ are the two factors and the diagonal.

Suppose that $\kappa\colon K_\Tr(B)\to B$ is an embedding, and put $\alpha_v=\kappa\circ\iota_v$. For each triple $(i,j)\to k$, the endomorphisms $\alpha_i$ and $\alpha_j$ have commuting images and $\alpha_k(b)=\alpha_i(b)\alpha_j(b)$ for all $b\in B$. For such endomorphisms, chain lifts satisfy $\Phi_k\equiv\Phi_i+\Phi_j$ on $Z$ modulo a finitely generated submodule (Lemma~\ref{lem:product}). Consequently, if the standard basis vector $e_0$ of $\ZZ^V$ is an integral linear combination of the vectors $e_i+e_j-e_k$, the same combination of the maps $\Phi_v$ shows that $\Phi_0$ maps $Z$ into a finitely generated submodule (Corollary~\ref{cor:row}). Theorem~\ref{thm:raising} then applies to $\alpha_0$, provided $\iota_0$ is injective. We show that every $\iota_v$ is injective, for every group $G$, when the incidence graph of $\Tr$ has girth at least $12$, by realising $K_\Tr(G)$ as the fundamental group of a non-positively curved complex of groups (Lemma~\ref{lem:injective}). A single system $\Tr$ satisfying both conditions serves for every group and every degree; it has $15$ triples and is obtained from the flags of the Fano plane (Proposition~\ref{prop:certificate}). Finally, an embedding $K_\Tr(B)\to B$ exists whenever $B$ is itself finitely generated and recursively presented relative to a fixed oracle, and contains every finitely generated group that is recursively presented relative to that oracle. Both properties survive the ascending HNN extension. A two-generator universal group supplies the initial case $m=1$ (Lemma~\ref{lem:initial}).

In particular, for $n=2$ the argument gives a new proof of Leary's theorem that does not use the construction of~\cite{Lea}; he had asked for a simpler proof of his Theorem~1.1 in~\cite[Section 3]{Lea}.

\subsection*{Mitotic groups}
The relations associated to a single triple in~\eqref{eq:KSigma} are the factor-diagonal relations underlying a mitosis in the sense of Baumslag, Dyer and Heller~\cite[Section~4]{BDH}. A supergroup $M$ of a group $H$ is a \emph{mitosis} of $H$ if it is generated by $H$ and two elements $s,d$ such that $[h',s^{-1}hs]=1$ and $d^{-1}hd=h\cdot s^{-1}hs$ for all $h,h'\in H$.
The present construction replaces the conjugations by $s$ and $d$ by a finite network of endomorphisms satisfying the girth and integrality conditions above. Additivity is used on the top cycle module of a partial resolution (Section~\ref{sec:products}), to raise a finiteness property rather than to annihilate ordinary homology.

\subsection*{Relation to the work of Fournier-Facio and Zaremsky}
Fournier-Facio and Zaremsky proved that if every finitely generated recursively presented group embeds in a recursively presented group of type $\FP{n}$, then every finitely presented group embeds in a group of type $\Ft{n}$~\cite[Theorem~A]{FFZ}. Theorem~\ref{thm:oracle} with the empty oracle supplies this hypothesis, so Theorem~\ref{thm:B} can also be deduced from their result. We give an equally short direct proof in Subsection~\ref{sec:B}.

\subsection*{Statement on AI use}
ChatGPT 6 Astra was used to assist with the proof of the main results and the preparation of an initial draft of the manuscript. Claude Opus 5.5 was used in subsequent editing. The arguments and text have been reviewed, checked, and substantially revised by the authors, who are responsible for the final manuscript.

\subsection*{Formalization}
Theorem~\ref{thm:A} has been formalized in \textsf{Lean/Mathlib} and is available on request.

\section{Preliminaries}\label{sec:prelim}

\subsection{Conventions}
Modules are left modules, and finite generation of a module refers to the indicated group ring. All finiteness properties are taken with integer coefficients; see~\cite[Chapter~VIII]{Bro}. A group of type $\FP{1}$ is finitely generated. Presentations are written $\langle X\colon\rels\rangle$. For an injective endomorphism $\varphi$ of a group $B$, the ascending HNN extension $\langle B,t\colon t^{-1}bt=\varphi(b),\ b\in B\rangle$ contains $B$ by Britton's lemma~\cite[IV.2]{LS}; its \emph{height} is the homomorphism to $\ZZ$ sending $t$ to $1$ and $B$ to $0$.

Let $\theta\colon\Gamma\to B$ be a homomorphism of groups, extended to group rings. An additive map $f\colon P\to P'$ from a $\ZZ\Gamma$-module to a $\ZZ B$-module is \emph{$\theta$-semilinear} if $f(\lambda x)=\theta(\lambda)f(x)$ for all $\lambda\in\ZZ\Gamma$ and $x\in P$. We shall use repeatedly the following consequence of the definition:
\begin{equation}\label{eq:semilinear}
\text{if $P$ is generated by $x_1,\dots,x_q$, then } f(P)\subseteq \ZZ B\, f(x_1)+\dots+\ZZ B\, f(x_q).
\end{equation}

\subsection{Chain lifts}\label{sec:lifts}
Throughout Sections~\ref{sec:prelim}--\ref{sec:HNN}, $m\ge 1$, $B$ is a group of type $\FP{m}$, the sequence~\eqref{eq:resolution} is fixed, and $Z=\ker d_m$. We write $F_*$ for the unaugmented complex $F_m\to\cdots\to F_0$. It satisfies
\[
H_0(F_*)\cong\ZZ\ \text{(via $\varepsilon$)},\qquad H_i(F_*)=0\quad(0<i<m),\qquad H_m(F_*)=Z.
\]
Note that $H_m(F_*)$ is the module $Z$, not the group homology $H_m(B;\ZZ)$.

\begin{definition}\label{def:lift}
Let $\theta$ be an endomorphism of $B$. A \emph{chain lift} of $\theta$ is a family of $\theta$-semilinear maps $f_i\colon F_i\to F_i$, $0\le i\le m$, such that $d_if_i=f_{i-1}d_i$ for $1\le i\le m$ and $\varepsilon f_0=\varepsilon$. Its restriction to $Z$, which it maps into $Z$, is denoted by the same letter.
\end{definition}

A chain lift of $\theta$ is the same as a $\ZZ B$-linear chain map, over the identity of $\ZZ$, from $F_*$ to the complex obtained from $F_*$ by restricting scalars along $\theta$. Since the $F_i$ are free and the augmented complex is exact at $F_0,\dots,F_{m-1}$, such a map is constructed degree by degree on bases, as in the comparison theorem~\cite[I.7]{Bro}. In particular, chain lifts exist. They are unique up to homotopy only below the top degree; the following lemma is the substitute that we need.

\begin{lemma}\label{lem:comparison}
If $f$ and $g$ are chain lifts of the same endomorphism $\theta$ of $B$, then $(f-g)(Z)$ is contained in a finitely generated $\ZZ B$-submodule of $Z$.
\end{lemma}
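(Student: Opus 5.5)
The plan is to build a chain homotopy between $f$ and $g$ in degrees $0,\dots,m-1$, as in the comparison theorem, and to read off the top-degree discrepancy from it. Put $h_i=f_i-g_i$. Each $h_i$ is $\theta$-semilinear and commutes with the differentials, and $\varepsilon h_0=0$. We construct $\theta$-semilinear maps $s_i\colon F_i\to F_{i+1}$ for $0\le i\le m-1$ such that
\[
h_i=d_{i+1}s_i+s_{i-1}d_i\qquad(0\le i\le m-1),\qquad s_{-1}=0.
\]
The construction is inductive on free bases. In degree $0$, for each basis element $x$ we have $h_0(x)\in\ker\varepsilon=\operatorname{im}d_1$, so we choose $s_0(x)$ with $d_1s_0(x)=h_0(x)$ and extend $\theta$-semilinearly. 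Freeness makes this extension legitimate, exactly as in the existence of chain lifts.

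For the inductive step at $i\le m-1$, the element $y=h_i(x)-s_{i-1}d_i(x)$ satisfies
\[
d_iy=h_{i-1}d_ix-d_is_{i-1}d_ix=s_{i-2}d_{i-1}d_ix=0.
\]
Since the augmented complex is exact at $F_i$ for $i<m$, we get $y\in\operatorname{im}d_{i+1}$, and we define $s_i(x)$ to be a preimage of $y$. The homotopy therefore reaches degree $m-1$ but cannot be continued to a map $s_m\colon F_m\to F_{m+1}$, because there is no $F_{m+1}$. This is the only obstacle, and it is exactly what the lemma accounts for.

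In the top degree, consider $D=h_m-s_{m-1}d_m\colon F_m\to F_m$. The same computation gives $d_mD=h_{m-1}d_m-d_ms_{m-1}d_m=s_{m-2}d_{m-1}d_m=0$, so $D$ takes values in $Z$. On $Z$ we have $d_m=0$, so $(f-g)|_Z=h_m|_Z=D|_Z$. Now $D$ is $\theta$-semilinear on the finitely generated free module $F_m$, say with basis $x_1,\dots,x_q$. By \eqref{eq:semilinear},
\[
D(F_m)\subseteq M:=\ZZ B\,D(x_1)+\dots+\ZZ B\,D(x_q).
\]
Each $D(x_j)$ lies in $Z$, so $M$ is a finitely generated submodule of $Z$. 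Hence $(f-g)(Z)=D(Z)\subseteq D(F_m)\subseteq M$, as required.

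The only points needing care are that semilinear extension from a basis is well defined, since $F_i$ is free and $\theta$ extends to a ring map, and that $s_{-1}=0$ handles the base case. No step is genuinely hard.
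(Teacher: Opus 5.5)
Your proof is correct and is essentially the same as the paper's. Both build a $\theta$-semilinear partial homotopy $s_0,\dots,s_{m-1}$ below the top degree, then note that $h_m-s_{m-1}d_m$ lands in $Z$, agrees with $f-g$ on $Z$, and has image in a finitely generated submodule by semilinearity on the finitely generated free module $F_m$.
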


\begin{proof}
Since $\varepsilon(f_0-g_0)=0$ and $\ker\varepsilon=d_1(F_1)$, choose a $\theta$-semilinear map $\sigma_0\colon F_0\to F_1$ with $f_0-g_0=d_1\sigma_0$. Inductively, for $1\le i<m$, construct $\theta$-semilinear maps $\sigma_i\colon F_i\to F_{i+1}$ satisfying
\[
f_i-g_i=d_{i+1}\sigma_i+\sigma_{i-1}d_i.
\]
Indeed, once the preceding maps have been constructed, the chain-map identities give
\[
d_i(f_i-g_i-\sigma_{i-1}d_i)
=(f_{i-1}-g_{i-1}-d_i\sigma_{i-1})d_i=0.
\]
For $i=1$ the last equality follows from the choice of $\sigma_0$, and for $i>1$ from $d_{i-1}d_i=0$. Exactness at $F_i$ lets us lift the values through $d_{i+1}$ and extend $\theta$-semilinearly.

Now $c=f_m-g_m-\sigma_{m-1}d_m$ satisfies $d_mc=0$, so $c(F_m)\subseteq Z$, and $c$ agrees with $f_m-g_m$ on $Z$. Since $c$ is $\theta$-semilinear and $F_m$ is finitely generated, $c(F_m)$ lies in a finitely generated submodule of $Z$ by~\eqref{eq:semilinear}.
\end{proof}

\begin{remark}\label{rem:resolution-independence}
The hypothesis of Theorem~\ref{thm:raising} is also independent of the chosen finite free partial resolution. Let $F'_*$ be another such resolution through degree $m$, put $Z'=\ker d'_m$, and choose comparison maps $u\colon F_*\to F'_*$ and $v\colon F'_*\to F_*$ lifting $\id_\ZZ$. If $\Phi$ is a chain lift of $\varphi$ with $\Phi(Z)\subseteq M$, then $u\Phi v$ is a chain lift of $\varphi$ on $F'_*$ and maps $Z'$ into the finitely generated submodule $u_m(M)\subseteq Z'$. Lemma~\ref{lem:comparison}, applied to $F'_*$, gives the same conclusion for every chain lift on $F'_*$. Interchanging the two resolutions proves independence.
\end{remark}

\subsection{Presentations relative to an oracle}\label{sec:oracle}
Fix $\orc\subseteq\NN$. All alphabets below are finite or countable, with a fixed effective enumeration, so that finite words have a computable encoding by natural numbers. A set of words is \emph{$\orc$-recursively enumerable} ($\orc$-r.e.) if some Turing machine with oracle $\orc$ enumerates it. A presentation $\langle X\colon\rels\rangle$ is $\orc$-r.e.\ if $\rels$ is, and a countable group is \emph{$\orc$-recursively presented} if it admits such a presentation. For $\orc=\emptyset$ these are the countable recursively presented groups.

A finitely generated $\orc$-recursively presented group has an $\orc$-r.e.\ presentation on any finite generating set, since the relations among finitely many words in the generators of an $\orc$-r.e.\ presentation form an $\orc$-r.e.\ set. Every countable group has an $\orc$-r.e.\ presentation for a suitable $\orc$: choose a countable presentation and let $\orc$ encode its set of relators.

Adding finitely many generators and relators to an $\orc$-r.e.\ presentation yields an $\orc$-r.e.\ presentation. In particular, if $H$ is finitely generated and $\orc$-recursively presented and $\varphi$ is an injective endomorphism of $H$, then $\langle H,t\colon t^{-1}ht=\varphi(h),\ h\in H\rangle$ is $\orc$-recursively presented: it suffices to impose $t^{-1}xt=w_x$ for $x$ in a finite generating set of $H$, where $w_x$ is any word representing $\varphi(x)$.

\section{A finite system of triples}\label{sec:triples}

\begin{definition}\label{def:triples}
  An \emph{oriented triple system} $\Tr$ on a finite set $V$ is a finite set of triples $\ell=(i,j)\to k$ of distinct elements of $V$; we call $i,j$ the \emph{inputs} and $k$ the \emph{output} of $\ell$, and the order of the inputs is immaterial. The elements of $V$ are called \emph{indices}.

  The \emph{incidence graph} $\Gamma(\Tr)$ is the bipartite graph with vertex set $V\sqcup\Tr$ in which $v\in V$ is joined to $\ell\in\Tr$ when $v$ is an entry of $\ell$. The \emph{row} of $\ell=(i,j)\to k$ is $W_\ell=e_i+e_j-e_k\in\ZZ^V$, where $(e_v)_{v\in V}$ is the standard basis, and $W$ is the $\Tr\times V$ integer matrix with rows $W_\ell$.
\end{definition}

We shall use a system $\Tr$ with a distinguished element $0\in V$ satisfying
\begin{align}
&\girth\Gamma(\Tr)\ge 12, \label{eq:girth}\\
&\nu W=e_0\ \text{for some } \nu\in\ZZ^\Tr. \label{eq:row}
\end{align}
Condition~\eqref{eq:row} means that for every abelian group $A$ and every family $(x_v)_{v\in V}$ in $A$,
\[
\textstyle\sum_{\ell=(i,j)\to k}\nu_\ell\,(x_i+x_j-x_k)=x_0 .
\]
Condition~\eqref{eq:girth} is used only in Lemma~\ref{lem:injective}, and condition~\eqref{eq:row} only in Corollary~\ref{cor:row}.

\begin{proposition}\label{prop:certificate}
There is an oriented triple system $\Tr$ with $23$ indices and $15$ triples satisfying~\eqref{eq:girth} and~\eqref{eq:row}, with $\nu\in\{\pm1\}^\Tr$.
\end{proposition}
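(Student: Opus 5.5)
The plan is to build $\Tr$ from the incidence structure of the Fano plane, using its $7$ points, $7$ lines and $21$ flags. The incidence graph of the Fano plane is the Heawood graph: it is cubic, has $14$ vertices and $21$ edges, and has girth $6$. First I would take the $21$ flags as indices. For each point $p$ I would form one triple on the three flags through $p$, and for each line $L$ one triple on the three flags contained in $L$. This gives $14$ triples. In the resulting incidence graph each flag index has degree $2$, joined to its point-triple and its line-triple, and each triple has degree $3$. So $\Gamma$ is exactly the Heawood graph with every edge subdivided once. Its girth is therefore $2\cdot 6=12$, which already gives~\eqref{eq:girth} for these $14$ triples. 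The last triple would bring in the remaining indices, including $0$, so as to reach $23$ indices and $15$ triples. I would attach it so that it creates no short cycle. The safest choice is to make its new indices leaves, or to join it to flags that lie far apart in the subdivided Heawood graph. Any cycle through the new triple must then travel between two of its entries inside the old graph, so it suffices that any two old entries of the new triple are at distance at least $10$ there.

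Next comes condition~\eqref{eq:row}, with $\nu\in\{\pm1\}^\Tr$. I would first record the necessary bookkeeping. Each row $W_\ell$ has coordinate sum $1$, so $\nu W=e_0$ forces $\sum_\ell\nu_\ell=1$; with $15$ signs this means eight $+1$'s and seven $-1$'s. Every index $v\neq0$ must receive total coefficient $0$. For a flag $v$ lying in exactly two triples, this says the two signed contributions cancel: $\nu_\ell\epsilon_\ell(v)+\nu_{\ell'}\epsilon_{\ell'}(v)=0$, where $\epsilon_\ell(v)=+1$ if $v$ is an input of $\ell$ and $-1$ if $v$ is the output. These are local sign constraints on the subdivided Heawood graph. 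I would choose $\nu$ first, say $\nu=+1$ on point-triples and $-1$ on line-triples, together with a suitable sign on the extra triple. Then I would choose the orientations, meaning which flag of each triple is the output, so that every flag cancels. With that choice a flag must be an input of its point-triple exactly when it is an input of its line-triple. Since each triple has exactly one output, this asks for a set of $7$ flags that contains one flag through each point and one flag on each line, and is not used by the extra triple. Equivalently, it is a perfect matching of the Heawood graph, and such matchings exist because the graph is cubic and bipartite. The remaining discrepancy must be absorbed by the fifteenth triple and routed to $e_0$.

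The main obstacle is making the fifteenth triple, and the count of $23$ indices, consistent with both conditions at once. A genuinely new index of degree $1$, other than $0$, cannot cancel, since its coordinate in $\nu W$ would be $\pm\nu_\ell\neq0$. So the extra triple must reuse existing indices, or the flag/point/line triples must be modified. For instance, I would try removing one flag from the matching picture and splitting it into two indices, one of which is $0$. Another option is to let the new triple contain $0$ and two flags whose uncancelled contributions it absorbs, while checking the distance-$\ge 10$ condition so that the girth stays at least $12$. I would settle this by an explicit finite search over these small modifications, checking the parity count $\sum\nu_\ell=1$ and the vertex-by-vertex cancellation. The final certificate would be written out as an explicit list of $15$ oriented triples and signs. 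Its verification is then a direct check: the girth follows from the subdivided-Heawood description plus the distance condition on the added triple, and~\eqref{eq:row} follows by summing the signed rows.
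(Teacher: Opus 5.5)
\textbf{Gap: the fifteenth triple is never constructed.} Your set-up coincides with the paper's: the flags of the Fano plane as indices, one triple per point and one per line, and $\nu=+1$ on point triples and $-1$ on line triples. Outputs form a perfect matching of the Heawood graph; the paper takes the flags $(j+3,L_j)$ with $L_j=\{j,j+1,j+3\}$. The step the statement is really about is missing: you defer the fifteenth triple, and the count of $23$ indices, to an unspecified finite search. Your concrete suggestions do not close this as stated.
\begin{itemize}
\item With all $21$ flags present and the matching orientation, the fourteen signed rows already sum to $0$. So there is no ``remaining discrepancy'' or ``uncancelled contribution'' for a new triple to absorb.
\item Splitting a flag into $f'$ and $0$ leaves $\pm(e_0-e_{f'})$ from the fourteen rows. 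No single row $\pm W_\gamma$, which has three nonzero entries, can correct this. You would also have only $22$ indices, and any new index other than $0$ in the fifteenth triple is a leaf, which you yourself observed cannot cancel.
\end{itemize}

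\textbf{The missing idea.} Split one flag into two \emph{new} indices, neither of which is $0$. The paper takes the matched flag $(3,L_0)$, which is the output of both $\pi_3$ and $\lambda_0$, replaces it by $b$ in $\pi_3$ and by $a$ in $\lambda_0$, and adds $\gamma=(b,0)\to a$ with $\nu_\gamma=+1$.
\begin{itemize}
\item \emph{Row condition.} The fourteen old rows now contribute $e_a-e_b$, and $W_\gamma=e_b+e_0-e_a$, so $\nu W=e_0$.
\item \emph{Counts.} There are $20+3=23$ indices and $15$ triples, and $\nu$ has eight entries $+1$, as your parity count requires.
\item \emph{Girth.} Your distance criterion for reused old entries does not apply, since $a$ and $b$ are new. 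Instead, every index other than $0$ lies in exactly two triples, so $\Gamma(\Tr)$ is obtained from a graph $H$ by subdividing every edge and attaching a pendant vertex $0$ at $\gamma$. Here $H$ is the Heawood graph with the edge $\{3,L_0\}$ subdivided by $\gamma$. Subdividing an edge shortens no cycle, so $H$ still has girth at least $6$, and $\Gamma(\Tr)$ has girth at least $12$.
\end{itemize}
Your own counting already narrows the search. Fifteen triples give $45$ incidences among $23$ indices, and every index other than $0$ needs degree at least $2$. Hence $0$ must be a leaf and every other index must lie in exactly two triples.
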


\begin{proof}
Let the points of the Fano plane be the elements of $\ZZ/7$ and its lines the sets $L_j=\{j,j+1,j+3\}$, $j\in\ZZ/7$. Its flags are the pairs $f_{j,d}=(j+d,L_j)$ with $j\in\ZZ/7$ and $d\in\{0,1,3\}$; the flags through the point $p$ are $f_{p,0}$, $f_{p-1,1}$ and $f_{p-3,3}$. Let $V$ consist of the twenty flags other than $f_{0,3}=(3,L_0)$ and of three further indices $a$, $b$ and $0$, and let $\Tr$ consist of the fifteen triples
\begin{align*}
\pi_p&=(f_{p,0},f_{p-1,1})\to f_{p-3,3}\quad(p\ne3), & \pi_3&=(f_{3,0},f_{2,1})\to b,\\
\lambda_j&=(f_{j,0},f_{j,1})\to f_{j,3}\quad(j\ne0), & \lambda_0&=(f_{0,0},f_{0,1})\to a,\\
\gamma&=(b,0)\to a.
\end{align*}
Thus $\pi_p$ consists of the flags through $p$ and $\lambda_j$ of the flags on $L_j$, except that the flag $(3,L_0)$ is replaced by $b$ in $\pi_3$ and by $a$ in $\lambda_0$.

Put $\nu=1$ on the triples $\pi_p$ and $\gamma$, and $\nu=-1$ on the triples $\lambda_j$. In $\nu W=\sum_pW_{\pi_p}-\sum_jW_{\lambda_j}+W_\gamma$, a flag $f_{j,0}$ or $f_{j,1}$ is an input of one triple $\pi_p$ and of one triple $\lambda_j$, and contributes $1-1=0$; a flag $f_{j,3}$ with $j\ne0$ is the output of one $\pi_p$ and one $\lambda_j$, and contributes $-1+1=0$; the index $a$ contributes $1$ from $\lambda_0$ and $-1$ from $\gamma$, the index $b$ contributes $-1$ from $\pi_3$ and $1$ from $\gamma$, and the index $0$ contributes $1$ from $\gamma$. Hence $\nu W=e_0$, which is~\eqref{eq:row}.

Every index other than $0$ lies in exactly two triples, and $0$ lies in $\gamma$ only. Let $H$ be the graph with vertex set $\Tr$ and, for each index other than $0$, an edge joining the two triples that contain it. Then $\Gamma(\Tr)$ is obtained from $H$ by subdividing every edge and attaching the pendant vertex $0$ to $\gamma$, so its girth is twice that of $H$. The graph $H$ is the incidence graph of the Fano plane, with $\pi_p$ corresponding to $p$ and $\lambda_j$ to $L_j$, in which the edge joining $3$ to $L_0$ is subdivided by the vertex $\gamma$; see Figure~\ref{fig:triple-graph}. The incidence graph of a projective plane is bipartite and contains no cycle of length $4$, since two points lie on a unique line; hence it has girth at least $6$, and subdividing an edge does not shorten any cycle. Therefore $\Gamma(\Tr)$ has girth at least $12$, which is~\eqref{eq:girth}.
\end{proof}

\begin{figure}[tbp]
\centering
\begin{tikzpicture}[line width=.45pt,>={Stealth[length=1.5mm]},
  every node/.style={font=\small,minimum size=0mm,inner sep=1.3pt}]
  \def\R{2.25}
  \foreach \p/\slot in {0/0,3/2,6/4,2/6,5/8,1/10,4/12}{
    \pgfmathsetmacro{\ang}{900/7-\slot*360/14}
    \node[draw,circle] (p\p) at (\ang:\R) {};
    \ifnum\p=3
      \node at (\ang-5:2.55) {$\pi_{\p}$};
    \else
      \node at (\ang:2.55) {$\pi_{\p}$};
    \fi
  }
  \foreach \j/\slot in {0/1,3/3,6/5,2/7,5/9,1/11,4/13}{
    \pgfmathsetmacro{\ang}{900/7-\slot*360/14}
    \node[draw,circle] (l\j) at (\ang:\R) {};
    \ifnum\j=0
      \node at (\ang+5:2.55) {$\lambda_{\j}$};
    \else
      \node at (\ang:2.55) {$\lambda_{\j}$};
    \fi
  }
  \foreach \j in {0,...,6}{
    \draw[<-] (l\j)--(p\j);
    \pgfmathtruncatemacro{\p}{mod(\j+1,7)}
    \draw[<-] (l\j)--(p\p);
  }
  \foreach \j in {1,...,6}{
    \pgfmathtruncatemacro{\p}{mod(\j+3,7)}
    \draw[->] (l\j)--(p\p);
  }
  \node[draw,circle] (gamma) at (0,3.05) {};
  \draw[->] (l0)--node[above left=0pt] {$a$} (gamma);
  \draw[->] (gamma)--node[above right=0pt] {$b$} (p3);
  \draw[->] (gamma)--node[right] {$0$} (0,3.85);
  \node[left=2pt] at (gamma) {$\gamma$};
\end{tikzpicture}
\caption{The graph $H$ of Proposition~\ref{prop:certificate}, with the half-edge $0$ attached. Its vertices are the triples, and its unlabelled edges are the remaining flags of the Fano plane. The edge from $\lambda_0$ to $\pi_3$ is subdivided into $a$ and $b$ by $\gamma$. The orientations are described in Remark~\ref{rem:flow}.}
\label{fig:triple-graph}
\end{figure}

\begin{remark}\label{rem:flow}
The construction is an instance of the following principle. Let $H$ be a finite graph in which every vertex has degree $3$, one of the edges being a half-edge $0$ with a single endpoint, and orient $H$ so that no vertex is a source or a sink and the half-edge $0$ is directed outwards. Taking the triples to be the sets of edges at the vertices, declare the output at each vertex to be the edge whose orientation there differs from that of the other two, and put $\nu=1$ or $\nu=-1$ at vertices with one or two incoming edges respectively. Then the matrix with rows $\nu_\ell W_\ell$ is the oriented incidence matrix of $H$, with entry $1$ at the tail and $-1$ at the head of each edge, so $\nu W$ vanishes on every edge with two endpoints and equals $1$ on the half-edge: this is~\eqref{eq:row}. Dually, a solution of the equations $x_k=x_i+x_j$ is a conserved flow on $H$, and conservation forces the flow $x_0$ through the half-edge to vanish. Condition~\eqref{eq:girth} holds as soon as $H$ has girth at least $6$. In the proof above, $H$ is oriented from points to lines along the flags $f_{j,0}$, $f_{j,1}$ and from lines to points along the flags $f_{j,3}$, the subdivided flag $(3,L_0)$ runs from $L_0$ through $\gamma$ to $3$, and the half-edge $0$ leaves $\gamma$.
\end{remark}

We fix the system $\Tr$ of Proposition~\ref{prop:certificate} for the rest of the paper.

\section{The complex-of-groups construction}\label{sec:complex-groups}

Recall the group $K_\Tr(G)$ from~\eqref{eq:KSigma} and its canonical maps $\iota_v$. For a single triple these maps are injective because $K_\Tr(G)=G\times G$. For a general triple system, the following condition on the incidence graph ensures injectivity.

\begin{lemma}\label{lem:injective}
If $\girth\Gamma(\Tr)\ge 12$, then $\iota_v$ is injective for every $v\in V$ and every group $G$.
\end{lemma}

\begin{proof}
Let $\mathcal{P}$ be the poset with elements $o$, the elements of $V$ and the elements of $\Tr$, ordered by $\ell<v<o$ whenever $v$ is an entry of $\ell$. The monomorphisms of the complex of groups defined below go from larger to smaller elements of $\mathcal{P}$, that is, along $o\to v\to\ell$ and $o\to\ell$. Its geometric realisation $|\mathcal{P}|$ is the cone with apex $o$ on $\Gamma(\Tr)$, whose $2$-simplices are the triangles $\{o,v,\ell\}$ with $v$ an entry of $\ell$. In particular, $|\mathcal{P}|$ is contractible.

Let $G(\mathcal{P})$ be the simple complex of groups over $\mathcal{P}$, in the sense of~\cite[II.12]{BH}, with local groups $G_o=1$, $G_v=G$ for $v\in V$ and $G_\ell=G\times G$ for $\ell\in\Tr$, and with the following monomorphisms: those out of $G_o$ are trivial, and for $\ell=(i,j)\to k$,
\begin{equation}\label{eq:local-maps}
G_i\to G_\ell,\ g\mapsto(g,1);\qquad G_j\to G_\ell,\ g\mapsto(1,g);\qquad G_k\to G_\ell,\ g\mapsto(g,g).
\end{equation}
The compatibility condition on composites is vacuous, because every chain of length two in $\mathcal{P}$ begins at $o$. The direct limit of $G(\mathcal{P})$ is generated by the groups $G_v$ and $G_\ell$ subject to the identifications~\eqref{eq:local-maps}. Since $G\times G$ is generated by $G\times 1$ and $1\times G$ subject only to the relations that these subgroups commute, eliminating the groups $G_\ell$ yields the presentation~\eqref{eq:KSigma}. As $|\mathcal{P}|$ is simply connected, this direct limit is the fundamental group of $G(\mathcal{P})$. It therefore suffices to show that $G(\mathcal{P})$ is developable, since the local groups of a developable complex of groups embed in its fundamental group~\cite[II.12]{BH}.

Make every triangle $\{o,v,\ell\}$ of $|\mathcal{P}|$ a Euclidean triangle with side lengths $|ov|=\sqrt3$, $|v\ell|=1$ and $|o\ell|=2$. Its angles are $\pi/6$ at $o$, $\pi/2$ at $v$ and $\pi/3$ at $\ell$, and edges of the same type have the same length, so this is a piecewise Euclidean structure with a single shape. By~\cite[II.12.28]{BH} (see also~\cite[III.C.4.17]{BH}), $G(\mathcal{P})$ is developable if it is non-positively curved, that is, if its local developments are locally $\mathrm{CAT}(0)$. For a $2$-dimensional piecewise Euclidean complex this is the link condition: every injective loop in the link of a vertex has length at least $2\pi$~\cite[II.5]{BH}. The links at vertices of the local developments are as follows.
\begin{center}\small
\begin{tabular}{lllll}
vertex & local group & link & edge length & girth \\ \hline
$o$ & $1$ & $\Gamma(\Tr)$ & $\pi/6$ & $\ge 12$\\
$v$ & $G$ & $G*\operatorname{Inc}(v)$ & $\pi/2$ & $\ge 4$\\
$\ell$ & $G\times G$ & point--coset incidence graph & $\pi/3$ & $\ge 6$
\end{tabular}
\end{center}
At $o$ the local group is trivial and the link is $\Gamma(\Tr)$. At $v$, put $\operatorname{Inc}(v)=\{\ell\in\Tr\colon v\text{ is an entry of }\ell\}$. The link is the simplicial join $G*\operatorname{Inc}(v)$ of two discrete sets: the set $G_v/G_o\cong G$ of directions towards lifts of $o$, and the set of triples containing $v$. Thus it is a complete bipartite graph.

At $\ell=(i,j)\to k$, let $H_1=G\times1$, $H_2=1\times G$ and let $H_3$ be the diagonal. The link is the bipartite graph with point vertices $z\in G\times G$ and tagged coset vertices $(a,zH_a)$, $a\in\{1,2,3\}$. Each point is joined to the three tagged cosets containing it. Distinct cosets of the same subgroup are disjoint. If $a\ne b$ and two cosets of $H_a$ and $H_b$ meet at $z$, their intersection is $z(H_a\cap H_b)=\{z\}$. Consequently, two distinct tagged cosets have at most one common point. This link has no cycle of length $4$, so its girth is at least $6$. In each case every injective loop has length at least $2\pi$, and the link condition holds.

Thus $G(\mathcal{P})$ is non-positively curved, hence developable, and every $\iota_v$ is injective.
\end{proof}

\begin{lemma}\label{lem:presentations}
The construction $G\mapsto K_\Tr(G)$ is functorial. If $G=\langle X\colon\rels\rangle$ with $X$ finite, then $K_\Tr(G)$ is presented by the generators $x_v$ ($x\in X$, $v\in V$) and the relators
\[
r_v\ (r\in\rels,\ v\in V),\qquad [x_i,y_j],\ \ x_k^{-1}x_ix_j\ \ \bigl(x,y\in X,\ (i,j)\to k\in\Tr\bigr),
\]
where $r_v$ denotes $r$ written in the letters $x_v$. Consequently, $K_\Tr$ preserves finite generation, finite presentability and, for finitely generated groups, $\orc$-recursive presentability.
\end{lemma}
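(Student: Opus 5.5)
Functoriality comes first. A homomorphism $\theta\colon G\to G'$ induces $\ast_v G_v\to\ast_v G'_v$ by $g_v\mapsto\theta(g)_v$. This map sends each defining relator $[g_i,h_j]$ and $g_k^{-1}g_ig_j$ of \eqref{eq:KSigma} to the corresponding relator for $G'$, because $\theta(gh)=\theta(g)\theta(h)$ and commutators map to commutators. It therefore descends to $K_\Tr(\theta)\colon K_\Tr(G)\to K_\Tr(G')$ with $K_\Tr(\theta)\circ\iota_v=\iota'_v\circ\theta$. Identities and composites are respected because this is already true on the free products.

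For the presentation, start from the standard presentation of $\ast_v G_v$: the generators $x_v$ ($x\in X$, $v\in V$) and the relators $r_v$. Then $K_\Tr(G)$ is obtained by adding all relators $[g_i,h_j]$ and $g_k^{-1}g_ig_j$ for arbitrary $g,h\in G$. It suffices to show that, modulo the finite list in the statement, these follow. Let $N$ be the normal closure of the relators in the statement, and work in $Q=(\ast_vG_v)/N$.

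\emph{Commutation.} The generators $x_i$ commute with the generators $y_j$, and $G_i$, $G_j$ are generated by their images. Hence the images of $G_i$ and $G_j$ commute elementwise, which gives every $[g_i,h_j]$.

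\emph{Diagonal.} The maps $g\mapsto g_k$ and $g\mapsto g_ig_j$ from $G$ to $Q$ are both homomorphisms. The second one is a homomorphism because $G_i$ and $G_j$ commute in $Q$: indeed $g_ig_jh_ih_j=g_ih_ig_jh_j=(gh)_i(gh)_j$. Both maps agree on the generators $x\in X$, by the relators $x_k^{-1}x_ix_j$, hence on all of $G$. This yields every $g_k^{-1}g_ig_j$, so $Q=K_\Tr(G)$. The one point requiring care is this homomorphism check for $g\mapsto g_ig_j$, which uses commutation first; everything else is bookkeeping.

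The consequences follow from the presentation. The generating set has $|X|\cdot|V|$ elements and so is finite. If $\rels$ is finite, the relator list is finite, since $V$ and $\Tr$ are finite. If $G$ is finitely generated and $\orc$-recursively presented, Section~\ref{sec:oracle} gives an $\orc$-r.e.\ presentation on the finite set $X$. The set $\{r_v\}$ is then enumerated by running the enumeration of $\rels$ and relabelling letters, and the finitely many extra relators do not affect $\orc$-r.e.-ness.
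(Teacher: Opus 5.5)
Your proof is correct and follows the same route as the paper. The relators $[x_i,y_j]$ make $G_i$ and $G_j$ commute, so $g\mapsto g_ig_j$ is a homomorphism that agrees with $g\mapsto g_k$ on $X$ and hence on all of $G$; you simply spell out the functoriality and the consequences in more detail than the paper does.
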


\begin{proof}
Functoriality is clear from~\eqref{eq:KSigma}. In the group given by the displayed presentation, the relators $[x_i,y_j]$ make the images of $G_i$ and $G_j$ commute, so $g\mapsto g_ig_j$ is a homomorphism on $G$. It agrees with $g\mapsto g_k$ on $X$, hence on $G$. So all relators in~\eqref{eq:KSigma} follow from the displayed ones.
\end{proof}

\section{Products of endomorphisms with commuting images}\label{sec:products}

For group homology, the map induced by a pointwise product of homomorphisms with commuting images differs from the sum of the induced maps by mixed terms coming from the Künneth formula; Baumslag, Dyer and Heller control these terms by an induction on the degree~\cite[Proposition~4.1]{BDH}. At the level of the partial resolution~\eqref{eq:resolution}, which has no intermediate homology, the first possible mixed tensor term occurs in degree $2m>m$.

\begin{lemma}\label{lem:product}
Let $\alpha,\beta$ be endomorphisms of $B$ such that $\alpha(B)$ and $\beta(B)$ commute elementwise, and let $\delta(b)=\alpha(b)\beta(b)$. For any chain lifts $\Phi_\alpha,\Phi_\beta,\Phi_\delta$ of $\alpha,\beta,\delta$, there is a finitely generated $\ZZ B$-submodule $M\subseteq Z$ such that
\[
(\Phi_\delta-\Phi_\alpha-\Phi_\beta)(Z)\subseteq M.
\]
\end{lemma}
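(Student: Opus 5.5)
The plan is to factor $\delta$ through $B\times B$ and compare chain lifts on a tensor-product resolution. Let $\Delta\colon B\to B\times B$ be the diagonal and $\mu\colon B\times B\to B$, $\mu(x,y)=\alpha(x)\beta(y)$. The map $\mu$ is a homomorphism because $\alpha(B)$ and $\beta(B)$ commute elementwise, and $\delta=\mu\circ\Delta$. Let $T_*$ be the tensor product $F_*\otimes F_*$ truncated at degree $m$, with $T_i=\bigoplus_{p+q=i}F_p\otimes F_q$. It consists of finitely generated free $\ZZ[B\times B]$-modules. By the Künneth formula it is a finite free partial resolution of $\ZZ$ over $B\times B$ that is exact below degree $m$. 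Write $Z_T$ for its top cycle module.

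First I construct a $\mu$-semilinear chain map $\Psi\colon T_*\to F_*$ adapted to the factors. On the summands $F_p\otimes F_0$ I put $\Psi(x\otimes y)=\beta(y)\,\Phi_\alpha(x)$, and on $F_0\otimes F_q$ I put $\Psi(y\otimes x)=\alpha(y)\,\Phi_\beta(x)$. These formulas agree on $F_0\otimes F_0$ up to a correction in degree $0$, which I fix there first. On the mixed summands $F_p\otimes F_q$ with $p,q\ge1$, I extend $\Psi$ degree by degree using exactness of $F_*$ below $m$, exactly as in the comparison theorem. Second, I choose a $\Delta$-semilinear chain lift $D\colon F_*\to T_*$, a diagonal approximation. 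Then $\Psi D$ is a chain lift of $\delta$, so by Lemma~\ref{lem:comparison} it suffices to prove the claim with $\Phi_\delta$ replaced by $\Psi D$.

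The core step is to analyse $D(z)$ for $z\in Z$. The composite $(1\otimes\varepsilon)D$, where $1\otimes\varepsilon\colon T_*\to F_*$ is the chain map covering the first projection, is a chain lift of $\id_B$. By Lemma~\ref{lem:comparison} it therefore agrees with the identity on $Z$ up to a finitely generated submodule, and symmetrically for $(\varepsilon\otimes1)D$. Hence the $F_m\otimes F_0$ component of $D(z)$ is of the form $z\otimes 1$ plus something $\Delta$-semilinear in $z$ with values in the augmentation ideal $\ker(1\otimes\varepsilon)$, and similarly for the $F_0\otimes F_m$ component.

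Applying $\Psi$ gives $\Psi D(z)=\Phi_\alpha(z)+\Phi_\beta(z)+R(z)$. The remainder $R$ collects the images of the mixed components, which live in degree $m$ of $T$ and hence in $F_p\otimes F_q$ with $p,q\ge1$, $p+q=m$, together with the augmentation-ideal corrections. The point is that all of these factor through maps defined on the whole finitely generated modules $F_m$, or on the $F_p\otimes F_q$, rather than only on $Z$. By \eqref{eq:semilinear} their images lie in finitely generated submodules of $F_m$.

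The main obstacle is to ensure that these finitely generated pieces can be taken inside $Z$ rather than merely inside $F_m$, since $R(z)$ is a cycle but its individual pieces need not be. I would handle this as in the proof of Lemma~\ref{lem:comparison}: subtract a correction of the form $\sigma d_m$, with $\sigma$ a suitable semilinear map built from the homotopies, so that the corrected remainder is defined on all of $F_m$, is $\delta$-semilinear, and has $d_m$ of it equal to $0$. Its image $c(F_m)$ then lies in $Z$ and is finitely generated by \eqref{eq:semilinear}, and $c$ agrees with $R$ on $Z$.

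A secondary nuisance is that $\Phi_\alpha$ and $\Phi_\beta$ are only $\alpha$- and $\beta$-semilinear, not $\delta$-semilinear. I expect this to be absorbed by the same augmentation-ideal bookkeeping: on $Z$, the factor $\beta(y)$ in $\Psi(z\otimes y)$ may be replaced by $\varepsilon(y)$ at the cost of a term $(\beta(y)-\varepsilon(y))\Phi_\alpha(z)$, and such terms lie in the finitely generated module $\ZZ B\,\Phi_\alpha(\text{basis of }F_m)$ after the same $\sigma d_m$ correction.
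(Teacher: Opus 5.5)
Your set-up matches the paper's: you factor $\delta$ through $\mu\colon B\times B\to B$, use a $\mu$-semilinear chain map $\Psi$ on $F_*\otimes F_*$ and a diagonal $D$, and compare $\tau_1=(1\otimes\varepsilon)D$ and $\tau_2=(\varepsilon\otimes1)D$ with $\id$ via Lemma~\ref{lem:comparison}. The gap is the step you flag as the main obstacle. Put $w(z)=D(z)-\tau_1(z)\otimes1-1\otimes\tau_2(z)$. Your remainder is then $\Psi(w(z))+\Phi_\alpha\bigl((\tau_1-\id)z\bigr)+\Phi_\beta\bigl((\tau_2-\id)z\bigr)$, and only $\Psi(w(z))$ is problematic. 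Knowing that it is a cycle lying in a finitely generated submodule $N\subseteq F_m$ gives nothing, since $Z\cap N$ need not be finitely generated (take $N=F_m$). The $\sigma d_m$ correction of Lemma~\ref{lem:comparison} does not transfer. That argument compares two chain lifts of the \emph{same} endomorphism and builds a semilinear homotopy upward from degree $0$. Here $\Psi D$, $\Phi_\alpha$, $\Phi_\beta$ are semilinear for three different homomorphisms, so $\Phi_\alpha+\Phi_\beta$ is not a chain lift of anything. Moreover, additivity is false in degree $0$, where $\varepsilon(\Phi_\alpha+\Phi_\beta)=2\varepsilon\neq\varepsilon\,\Psi D$, so there is no homotopy to build. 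The same objection applies to your augmentation-ideal terms. The difference $D(z)_{m,0}-\tau_1(z)\otimes1$ is not $\Delta$-semilinear, and the bookkeeping only places $\bigl(\beta(y)-\varepsilon(y)\bigr)\Phi_\alpha(x)$ in $\ZZ B\,\Phi_\alpha(F_m)$, which is a submodule of $F_m$, not of $Z$.

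The missing idea is to keep the degree-$(m+1)$ term of $F_*\otimes F_*$ rather than truncating at $m$. You introduce $Z_T$ but never use it, and the submodule of $Z_T$ killed by both projections is exactly the image of that term, which is finitely generated. Since $H_0(F_*)=\ZZ$, $H_i(F_*)=0$ for $0<i<m$, and the $F_p$ are free abelian, the K\"unneth formula gives $H_m(F_*\otimes F_*)\cong Z\oplus Z$ via $1\otimes\varepsilon$ and $\varepsilon\otimes1$. Your $w(z)$ is a cycle killed by both projections, because $(1\otimes\varepsilon)(1\otimes x)=0$ when $x$ has positive degree. Hence $w(z)=d(q)$ with $q$ in the finitely generated free $\ZZ[B\times B]$-module $(F_*\otimes F_*)_{m+1}$. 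The map $\Psi_m d$ is $\mu$-semilinear and takes values in $Z$, since $d_m\Psi_m d=\Psi_{m-1}dd=0$. So $\Psi(w(z))$ lies in the finitely generated submodule $\ZZ B\,\Psi_m d\bigl((F_*\otimes F_*)_{m+1}\bigr)$ of $Z$, uniformly in $z$ and without needing $w$ to be semilinear. This is how the paper's proof closes. A minor point: if $\Phi_\alpha(1)\neq\Phi_\beta(1)$ in $F_0$, changing only $\Psi_0$ in general breaks the chain-map identity in degree $1$. You can either first modify $\Phi_\beta$ by $d_1s+sd_1$, which does not change it on $Z$, or, as the paper does, build $\Psi$ freely and compare $\Psi i_1,\Psi i_2$ with $\Phi_\alpha,\Phi_\beta$ by Lemma~\ref{lem:comparison}.
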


\begin{proof}
Let $Q_*=F_*\otimes_\ZZ F_*$ with differential $d(x\otimes y)=dx\otimes y+(-1)^{|x|}x\otimes dy$, and with $\ZZ[B\times B]=\ZZ B\otimes_\ZZ\ZZ B$ acting factorwise. Each $Q_n=\bigoplus_{p+q=n}F_p\otimes_\ZZ F_q$ is a finitely generated free $\ZZ[B\times B]$-module. Write $\varepsilon_Q=\varepsilon\otimes\varepsilon\colon Q_0\to\ZZ$ for its augmentation. Let $\rho_1=\id\otimes\varepsilon$ and $\rho_2=\varepsilon\otimes\id$, where $\varepsilon$ is regarded as a chain map from $F_*$ to $\ZZ$ concentrated in degree $0$. These are chain maps $Q_*\to F_*$, semilinear with respect to the two projections $B\times B\to B$, and $\rho_1(x\otimes y)=0$ whenever $y$ has positive degree (similarly for $\rho_2$). The groups $F_p$ are free abelian and $H_0(F_*)\cong\ZZ$, so the Künneth formula has no $\mathrm{Tor}$ terms in degrees $\le m$, and gives $H_0(Q_*)\cong\ZZ$, $H_n(Q_*)=0$ for $0<n<m$, and an isomorphism
\[
(\rho_{1*},\rho_{2*})\colon H_m(Q_*)\xrightarrow{\ \cong\ }Z\oplus Z .
\]
In particular:
\begin{equation}\label{eq:kunneth}
\text{a cycle $w\in Q_m$ with $\rho_1(w)=\rho_2(w)=0$ lies in $d(Q_{m+1})$.}
\end{equation}

\emph{The product.} Since the images commute, $\eta(b,c)=\alpha(b)\beta(c)$ is a homomorphism $B\times B\to B$. Choose $h_0$ on a basis so that $\varepsilon h_0=\varepsilon_Q$. Since the $Q_n$ are free and the augmented complex $F_*\to\ZZ$ is exact at $F_0,\dots,F_{m-1}$, this extends degree by degree to $\eta$-semilinear maps $h_n\colon Q_n\to F_n$, $0\le n\le m$, commuting with the differentials. Put
\[
M_0=\ZZ B\cdot h_m d(Q_{m+1}).
\]
We have $d_mh_md=h_{m-1}dd=0$, so $M_0\subseteq Z$. Since $h_md$ is $\eta$-semilinear and $Q_{m+1}=\bigoplus_{p=1}^{m}F_p\otimes_\ZZ F_{m+1-p}$ is finitely generated, $M_0$ is finitely generated by~\eqref{eq:semilinear}. The chain maps $i_1(x)=x\otimes1$ and $i_2(x)=1\otimes x$ from $F_*$ to $Q_*$, where $1\in F_0=\ZZ B$, are semilinear with respect to $b\mapsto(b,1)$ and $b\mapsto(1,b)$. Hence $f=hi_1$ and $g=hi_2$ are chain lifts of $\alpha$ and $\beta$.

\emph{The diagonal.} Let $Q^{\mathrm{brut}}_{\le m}$ denote the brutal truncation of $Q_*$: its terms in degrees $0,\dots,m$ are unchanged and its terms in higher degrees are zero. Using $\varepsilon_Q$ in degree $0$ and exactness of the augmented complex at $Q_0,\dots,Q_{m-1}$, construct a chain map $\Delta\colon F_*\to Q^{\mathrm{brut}}_{\le m}$ with $\varepsilon_Q\Delta_0=\varepsilon$, semilinear with respect to the diagonal $B\to B\times B$. We continue to use the full complex $Q_*$, including $Q_{m+1}$, when applying~\eqref{eq:kunneth}. Then $h\Delta$ is a chain lift of $\delta$, and $\tau_a=\rho_a\Delta$ ($a=1,2$) is a $\ZZ B$-linear chain lift of $\id_B$. By Lemma~\ref{lem:comparison} there are finitely generated submodules $Y_1,Y_2\subseteq Z$ with $(\tau_a-\id)(Z)\subseteq Y_a$. For $z\in Z$, the element
\[
w=\Delta(z)-i_1\tau_1(z)-i_2\tau_2(z)
\]
is a cycle, and $\rho_1(w)=\tau_1(z)-\tau_1(z)-0=0$ because $\rho_1i_1=\id$ and $\rho_1i_2$ vanishes in positive degrees; likewise $\rho_2(w)=0$. By~\eqref{eq:kunneth}, $w\in d(Q_{m+1})$, so $h_m(w)\in M_0$. Since $hi_1\tau_1(z)=f(z)+f\bigl((\tau_1-\id)(z)\bigr)$ and similarly for $g$, we obtain
\[
(h\Delta-f-g)(Z)\subseteq M_0+\ZZ B\,f(Y_1)+\ZZ B\,g(Y_2),
\]
and the right-hand side is a finitely generated submodule of $Z$ by~\eqref{eq:semilinear}.

\emph{Arbitrary lifts.} Applying Lemma~\ref{lem:comparison} to the pairs $(\Phi_\delta,h\Delta)$, $(\Phi_\alpha,f)$ and $(\Phi_\beta,g)$, and adding the resulting finitely generated submodules to $M_0+\ZZ B\,f(Y_1)+\ZZ B\,g(Y_2)$, gives $M$.
\end{proof}

\begin{remark}
The proof gives a bound independent of $\alpha$ and $\beta$: if $F_p$ has rank $r_p$, then $M$ can be generated by $\sum_{p=1}^m r_pr_{m+1-p}+5r_m$ elements.
\end{remark}

\begin{corollary}\label{cor:row}
Let $(\alpha_v)_{v\in V}$ be endomorphisms of $B$ such that, for every $(i,j)\to k\in\Tr$, the images of $\alpha_i$ and $\alpha_j$ commute elementwise and $\alpha_k(b)=\alpha_i(b)\alpha_j(b)$ for all $b\in B$. Then every chain lift $\Phi_0$ of $\alpha_0$ maps $Z$ into a finitely generated $\ZZ B$-submodule of $Z$.
\end{corollary}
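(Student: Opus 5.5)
The plan is to combine Lemma~\ref{lem:product}, applied once per triple, with the integrality condition~\eqref{eq:row}, and then pass to an arbitrary lift of $\alpha_0$ using Lemma~\ref{lem:comparison}.

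First, for each $v\in V$ I fix a chain lift $\Phi_v$ of $\alpha_v$; chain lifts exist by the comparison-theorem construction of Section~\ref{sec:lifts}. For each triple $\ell=(i,j)\to k$ in $\Tr$, the hypotheses are exactly those of Lemma~\ref{lem:product} with $\alpha=\alpha_i$, $\beta=\alpha_j$ and $\delta=\alpha_k$. That lemma gives a finitely generated submodule $M_\ell\subseteq Z$ with
\[
(\Phi_k-\Phi_i-\Phi_j)(Z)\subseteq M_\ell .
\]

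Next I apply~\eqref{eq:row}. The maps $\Phi_v|_Z$ are additive maps $Z\to Z$, so they lie in the abelian group $A=\operatorname{Hom}_\ZZ(Z,Z)$. Taking $x_v=\Phi_v|_Z$ in that identity gives
\[
\Phi_0|_Z=\sum_{\ell=(i,j)\to k}\nu_\ell\,(\Phi_i+\Phi_j-\Phi_k)|_Z .
\]
For $z\in Z$, each summand $\nu_\ell(\Phi_i+\Phi_j-\Phi_k)(z)$ lies in $M_\ell$, since $M_\ell$ is a subgroup and is closed under negation. Hence $\Phi_0(Z)\subseteq M'=\sum_{\ell\in\Tr}M_\ell$. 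This is a finite sum of finitely generated submodules of $Z$, because $\Tr$ is finite, so $M'$ is itself a finitely generated submodule of $Z$.

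Finally, the statement concerns every chain lift $\Phi'_0$ of $\alpha_0$, not only the one chosen above. By Lemma~\ref{lem:comparison}, $(\Phi'_0-\Phi_0)(Z)\subseteq Y$ for some finitely generated submodule $Y\subseteq Z$. It follows that $\Phi'_0(Z)\subseteq M'+Y$, which is again finitely generated.

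I expect no real obstacle here. The one point that needs care is that the maps $\Phi_v$ are semilinear for different endomorphisms, so their combination is not a module map. This does not matter, because the argument uses only additivity of each $\Phi_v$ on $Z$ and the fact that each $M_\ell$ is a subgroup of $Z$.
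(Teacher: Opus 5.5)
Your proof is correct and is essentially the paper's argument: you apply Lemma~\ref{lem:product} once per triple, then use the identity~\eqref{eq:row} in the abelian group of additive maps $Z\to Z$. The only difference is that the paper chooses lifts only for $v\ne0$ and uses the given $\Phi_0$ directly, since Lemma~\ref{lem:product} already holds for arbitrary chain lifts; your final appeal to Lemma~\ref{lem:comparison} is therefore harmless but unnecessary.
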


\begin{proof}
Choose chain lifts $\Phi_v$ of $\alpha_v$ for $v\ne0$. For each $\ell=(i,j)\to k$, Lemma~\ref{lem:product} gives a finitely generated submodule $M_\ell\subseteq Z$ containing $(\Phi_i+\Phi_j-\Phi_k)(Z)$; let $M=\sum_{\ell\in\Tr}M_\ell$. By~\eqref{eq:row}, applied to the abelian group of additive maps $Z\to Z$,
\[
\Phi_0(z)=\sum_{\ell=(i,j)\to k}\nu_\ell\,\bigl(\Phi_i(z)+\Phi_j(z)-\Phi_k(z)\bigr)\in M\qquad(z\in Z).\qedhere
\]
\end{proof}

\section{Ascending HNN extensions: proof of Theorem~\ref{thm:raising}}\label{sec:HNN}

\begin{proof}[Proof of Theorem~\ref{thm:raising}]
By Britton's lemma, $B$ embeds in $E$. Since $\ZZ E$ is a free right $\ZZ B$-module, the functor $\ZZ E\otimes_{\ZZ B}-$ is exact. Put $P_i=\ZZ E\otimes_{\ZZ B}F_i$ and $Z_E=\ZZ E\otimes_{\ZZ B}Z$. Then $P_m\to\cdots\to P_0\to\ZZ[E/B]\to0$ is exact, the $P_i$ are finitely generated free $\ZZ E$-modules, and $Z_E$ is the kernel of $d_m\colon P_m\to P_{m-1}$.

Choose a chain lift $\Phi$ of $\varphi$ satisfying $\Phi(Z)\subseteq M$ for a finitely generated $\ZZ B$-submodule $M\subseteq Z$, and define $T_i\colon P_i\to P_i$ by
\[
T_i(a\otimes x)=at\otimes\Phi_i(x).
\]
This is well defined: for $b\in B$ we have $bt=t\varphi(b)$ in $E$, so $abt\otimes\Phi_i(x)=at\varphi(b)\otimes\Phi_i(x)=at\otimes\Phi_i(bx)$. The maps $T_i$ are $\ZZ E$-linear, commute with the differentials, and induce $T(gB)=gtB$ on $\ZZ[E/B]=\ZZ E\otimes_{\ZZ B}\ZZ$ because $\varepsilon\Phi_0=\varepsilon$. Let $\mu_1,\dots,\mu_r$ generate $M$. If $z\in Z$ and $\Phi_m(z)=\sum_p\lambda_p\mu_p$ with $\lambda_p\in\ZZ B$, then $T_m(a\otimes z)=\sum_pat\lambda_p\otimes \mu_p$. Hence
\begin{equation}\label{eq:TinM}
T_m(Z_E)\subseteq\ZZ E\,(1\otimes \mu_1)+\dots+\ZZ E\,(1\otimes \mu_r).
\end{equation}

The sequence
\begin{equation}\label{eq:coset}
0\to\ZZ[E/B]\xrightarrow{\,1-T\,}\ZZ[E/B]\xrightarrow{\,\varepsilon\,}\ZZ\to0
\end{equation}
is exact. Indeed, the height is constant on each coset $gB$ and $T$ raises it by one, so the terms of minimal height of a nonzero element of $\ZZ[E/B]$ survive in its image under $1-T$. The cokernel of $1-T$ is the quotient of $\ZZ[E/B]$ by the relations $gB=gtB$; together with $gbB=gB$ for $b\in B$, these identify $gB$ with $gxB$ for every $x\in B\cup\{t^{\pm1}\}$, hence identify all cosets, since $B$ and $t$ generate $E$.

The idea is that, modulo $(1-T_m)Z_E$, every $\zeta\in Z_E$ is congruent to $T_m\zeta$, which lies in the finitely generated submodule~\eqref{eq:TinM}. We make this precise with a mapping cone.

Let $C_*$ be the brutal truncation in degrees $\le m$ of the mapping cone of $1-T$: $C_0=P_0$, $C_j=P_j\oplus P_{j-1}$ for $1\le j\le m$, with differentials
\begin{align*}
D_1(x,y)&=d_1x+(1-T_0)y,\\
D_j(x,y)&=\bigl(d_jx+(1-T_{j-1})y,\,-d_{j-1}y\bigr)\qquad(2\le j\le m),
\end{align*}
augmented by the composite $C_0=P_0\to\ZZ[E/B]\xrightarrow{\varepsilon}\ZZ$. Since $T$ commutes with the differentials, $D_{j-1}D_j=0$. Let $\xi_1,\dots,\xi_s$ be a basis of $F_m$. We show that $C_*$ is exact in degrees $<m$ and that $\ker D_m$ is generated by the elements
\begin{equation}\label{eq:generators}
(1\otimes \mu_p,\,0)\quad(1\le p\le r),\qquad\bigl((1-T_m)(1\otimes\xi_q),\,-d_m(1\otimes\xi_q)\bigr)\quad(1\le q\le s).
\end{equation}
Mapping a free $\ZZ E$-module of rank $r+s$ onto $\ker D_m$ then extends $C_*$ to a partial resolution of $\ZZ$ by finitely generated free $\ZZ E$-modules in degrees $0,\dots,m+1$, so $E$ is of type $\FP{m+1}$.

\emph{Degree $0$.} Let $x\in P_0$ have augmentation $0$. By~\eqref{eq:coset}, its image in $\ZZ[E/B]$ is $(1-T)\bar y$ for the image $\bar y$ of some $y\in P_0$. Then $x-(1-T_0)y$ lies in the kernel of $P_0\to\ZZ[E/B]$, which is $d_1(P_1)$, so $x\in D_1(C_1)$.

\emph{Degrees $1\le j\le m$.} Let $(x,y)\in C_j$ with $D_j(x,y)=0$. We first show that $y=d_jz$ for some $z\in P_j$. If $j=1$, mapping $D_1(x,y)=0$ to $\ZZ[E/B]$ gives $(1-T)\bar y=0$, so $\bar y=0$ by~\eqref{eq:coset} and $y\in d_1(P_1)$. If $j\ge2$, then $d_{j-1}y=0$ and $y$ is a boundary because $H_{j-1}(P_*)=0$. Now $d_j\bigl(x+(1-T_j)z\bigr)=0$. If $j<m$, then $x+(1-T_j)z=d_{j+1}w$ for some $w$, and $(x,y)=D_{j+1}(w,-z)$. If $j=m$, then $\zeta=x+(1-T_m)z$ lies in $Z_E$ and
\[
(x,y)=(\zeta,0)+\bigl((1-T_m)(-z),\,-d_m(-z)\bigr).
\]
The elements $\bigl((1-T_m)x',-d_mx'\bigr)$ with $x'\in P_m$ form the image of a $\ZZ E$-linear map $P_m\to C_m$, which is generated by the second family in~\eqref{eq:generators}. For $\zeta\in Z_E$ we have $d_m\zeta=0$, hence
\[
(\zeta,0)=(T_m\zeta,0)+\bigl((1-T_m)\zeta,\,-d_m\zeta\bigr),
\]
where the first term lies in the span of the first family in~\eqref{eq:generators} by~\eqref{eq:TinM}, and the second in the span of the second family.
\end{proof}

Combining Theorem~\ref{thm:raising} with Corollary~\ref{cor:row}, we obtain:

\begin{corollary}\label{cor:raising}
Let $m\ge1$ and let $B$ be a group of type $\FP{m}$. Let $(\alpha_v)_{v\in V}$ be endomorphisms of $B$ satisfying the hypothesis of Corollary~\ref{cor:row}, with $\alpha_0$ injective. Then the ascending HNN extension of $B$ along $\alpha_0$ contains $B$ and is of type $\FP{m+1}$.\qed
\end{corollary}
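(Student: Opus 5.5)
The statement is Corollary~\ref{cor:raising}, and the plan is simply to chain the two preceding results. First I would apply Corollary~\ref{cor:row} to the family $(\alpha_v)_{v\in V}$. Its hypotheses are exactly those assumed here: for each triple the input images commute, and the output is the pointwise product of the inputs. So, for any chain lift $\Phi_0$ of $\alpha_0$ on the fixed partial resolution~\eqref{eq:resolution}, we get $\Phi_0(Z)\subseteq M$ for some finitely generated $\ZZ B$-submodule $M\subseteq Z$. Chain lifts exist by the comparison-theorem construction in Section~\ref{sec:lifts}, so such a $\Phi_0$ can be chosen.

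Second, I would set $\varphi=\alpha_0$, which is injective by assumption, and invoke Theorem~\ref{thm:raising} with $\Phi=\Phi_0$ and this $M$. Its hypotheses are that $B$ is of type $\FP{m}$, that $\varphi$ is injective, and that some chain lift maps $Z$ into a finitely generated submodule of $Z$; all of these now hold. The theorem gives that $B$ embeds in $E=\langle B,t\colon t^{-1}bt=\alpha_0(b)\rangle$ and that $E$ is of type $\FP{m+1}$. The embedding is also just Britton's lemma for ascending HNN extensions along an injective endomorphism.

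There is essentially no obstacle here. The only point worth checking is that Corollary~\ref{cor:row} needs no injectivity of the $\alpha_v$ and no condition other than~\eqref{eq:row}, which holds for the fixed system $\Tr$ of Proposition~\ref{prop:certificate}. Injectivity is used only for $\alpha_0$, and only in Theorem~\ref{thm:raising}.
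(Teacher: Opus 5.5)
Your proposal is correct and follows the paper's route exactly: the paper obtains this corollary by combining Corollary~\ref{cor:row}, which makes every chain lift of $\alpha_0$ map $Z$ into a finitely generated submodule, with Theorem~\ref{thm:raising} applied to $\varphi=\alpha_0$. You are also right that injectivity is needed only for $\alpha_0$, and only in Theorem~\ref{thm:raising}.
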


\section{Proofs of Theorem~\ref{thm:A}, Theorem~\ref{thm:B} and Corollary~\ref{cor:C}}

We fix an oracle $\orc\subseteq\NN$. The first lemma is the Higman--Neumann--Neumann embedding theorem~\cite{HNN} (see~\cite[IV.3]{LS}), in a form that keeps track of presentations.

\begin{lemma}\label{lem:HNN}
Every countable group with an $\orc$-r.e.\ presentation embeds in a two-generator $\orc$-recursively presented group $G^*$.
\end{lemma}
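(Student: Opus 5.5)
We follow the classical Higman--Neumann--Neumann construction and check at each stage that the presentations stay $\orc$-r.e. Let $G=\langle X\colon\rels\rangle$ with $\rels$ $\orc$-r.e. and $X=\{x_1,x_2,\dots\}$ countable and effectively enumerated; if $X$ is finite, pad it with extra generators and trivial relators $x_i$ for the remaining indices, which keeps $\rels$ $\orc$-r.e. The target is
\[
G^*=\langle G,a,b\rangle\ \text{modulo}\ x_i=b^{-i}a b a^{-1}b^{i}\cdots,
\]
realised through two HNN extensions in the standard way.

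\emph{Step one.} Form $G*F(a,b)$. In $F(a,b)$ the elements $u_i=a^{-1}b^{i}ab^{-i}\cdots$ (more precisely $b^{-i}ab^{i}$, $i\ge1$) freely generate a free subgroup. Likewise, in $G*F(a,b)$ the elements $x_i\,b^{-i}ab^{i}$ freely generate a free subgroup, by the normal form for free products, and so does $\langle a,b\rangle\supset\{b^{-i}ab^{i}\}$. Hence we take an HNN extension with stable letter $t$ conjugating $b^{-i}ab^{i}$ to $x_i b^{-i}ab^{i}$ for all $i$. Because the associated subgroups are free on the listed bases, $G$ embeds by Britton's lemma. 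In the result, $x_i=t^{-1}b^{-i}ab^{i}t\,(b^{-i}ab^{i})^{-1}$, so the group $H$ is generated by $a,b,t$, and $G\le H$.

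\emph{Step two.} Reduce to two generators with one more HNN extension, conjugating the free pair $(a,b)$ to a free pair $(t\,\cdot\,a^{-1}ta,\ t\,\cdot\,b^{-1}tb)$, or equivalently use the standard trick that any countable three-generator group embeds in a two-generator group. We prefer to do it directly: $\langle a,b\rangle$ is free of rank two, and in $H$ the elements $t,\ a^{-1}ta\cdots$ generate a free subgroup of rank two, by Britton normal forms. An HNN extension identifying them with the new stable letter gives a two-generator group $G^*$ generated by $a$ and the new letter, after rechoosing generators.

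\emph{Presentations.} $G^*$ is presented by the generators $X\cup\{a,b,t,s\}$ with the relators $\rels$, the countably many conjugation relators $t^{-1}b^{-i}ab^{i}t=x_ib^{-i}ab^{i}$ (a recursive family), and finitely many further relators. This presentation is $\orc$-r.e. We then eliminate the generators $x_i$ by Tietze moves, substituting the words above, and eliminate $b$ and $t$ in terms of the two final generators. Each relator of $\rels$ is computably rewritten, so the rewritten set is still $\orc$-r.e. By the remark in Subsection~\ref{sec:oracle}, the resulting finitely generated $\orc$-recursively presented group then has an $\orc$-r.e. presentation on its two generators.

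\emph{Main obstacle.} The hard step is the freeness verifications that justify the HNN extensions, particularly in step two, where one must choose the second free pair so that Britton's lemma and normal forms clearly give freeness. I would instead follow \cite[IV.3]{LS} verbatim for the embedding and only add the bookkeeping that the relators are produced by an $\orc$-computable procedure from an enumeration of $\rels$.
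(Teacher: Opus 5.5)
Your route is sound but reaches two generators differently from the paper.

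\textbf{The paper's route.} The paper uses a single HNN extension of $G*\langle a,b\rangle$. Besides the pairs $a^{-i}ba^i\mapsto x_ib^{-i}ab^i$ ($i\ge1$), it includes the pair $u_0=b\mapsto v_0=a$. Then $a=t^{-1}bt$ and $x_i=t^{-1}a^{-i}ba^it\,b^{-i}a^{-1}b^i$ can be eliminated, so $G^*$ is generated by $b,t$ immediately. All freeness checks take place in the free group $\langle a,b\rangle$. For the family $x_ib^{-i}ab^i$ this uses the retraction of $G*\langle a,b\rangle$ killing $G$, which is cleaner than your appeal to free-product normal forms.

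\textbf{Your route.} Your step one is the same construction without the index $0$, and yields the three-generator group $H=\langle a,b,t\rangle$. Your step two is a second HNN extension $s^{-1}as=t$, $s^{-1}bs=a^{-1}ta$. It is correct, but it needs a Britton's-lemma argument inside $H$ that you only assert. To complete it, check the following.
\begin{itemize}
\item In a reduced word in $t$ and $a^{-1}ta$, two consecutive stable letters of opposite sign are separated by exactly $a^{\pm1}$.
\item $a^{\pm1}$ lies in neither associated subgroup $\langle b^{-i}ab^i\colon i\ge1\rangle$ nor $\langle x_ib^{-i}ab^i\colon i\ge1\rangle$. Indeed, the retraction killing $G$ maps both into $\langle b^{-i}ab^i\colon i\ge1\rangle$. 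This subgroup avoids $a^{\pm1}$, because $\{b^{-i}ab^i\colon i\in\ZZ\}$ is a free basis of the normal closure of $a$ in $F(a,b)$.
\item Hence there are no pinches, and $t$, $a^{-1}ta$ freely generate a free subgroup.
\item Then $t=s^{-1}as$ and $b=sa^{-1}tas^{-1}$ give $G^*=\langle a,s\rangle$.
\end{itemize}
The other pair you mention first, $(ta^{-1}ta,\,tb^{-1}tb)$, does not visibly reduce the number of generators and should be dropped.

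\textbf{Comparison.} Your version costs an extra HNN extension and an extra normal-form argument. It does follow the familiar ``three generators to two'' pattern. The paper's extra pair $b\mapsto a$ avoids both. The $\orc$-r.e.\ bookkeeping is identical in the two versions: finitely many added relators plus a computable substitution into $\rels$.
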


\begin{proof}
Write $G=\langle x_1,x_2,\dots\colon\rels\rangle$, repeating generators if necessary, and let $L=\langle a,b\rangle$ be free of rank two. In $G*L$ put
\[
u_0=b,\quad u_i=a^{-i}ba^i;\qquad v_0=a,\quad v_i=x_ib^{-i}ab^i\qquad(i\ge1).
\]
The $u_i$ freely generate a free subgroup of $L$. So do the $v_i$, since the retraction $G*L\to L$ killing $G$ maps them to the free basis $\{b^{-i}ab^i\colon i\ge0\}$ of a subgroup of $L$. Hence $G*L$, and so $G$, embeds in
\[
G^*=\langle G*L,\,t\colon t^{-1}u_it=v_i,\ i\ge0\rangle .
\]
The defining relations give $a=t^{-1}bt$ and $x_i=t^{-1}a^{-i}ba^it\,b^{-i}a^{-1}b^i$ for $i\ge1$. Eliminating $a$ and the $x_i$ presents $G^*$ on $b,t$ with relators obtained from $\rels$ by substituting these words, which is an $\orc$-r.e.\ set.
\end{proof}

\begin{lemma}\label{lem:initial}
There is a two-generator $\orc$-recursively presented group $B_{1,\orc}$ that contains a copy of every finitely generated $\orc$-recursively presented group. In particular, $B_{1,\orc}$ is of type $\FP{1}$.
\end{lemma}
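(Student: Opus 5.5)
Our plan is to build $B_{1,\orc}$ as a universal group by taking a free product of all finitely generated $\orc$-recursively presented groups and applying Lemma~\ref{lem:HNN}. The only point needing care is that this free product should itself have an $\orc$-r.e.\ presentation.

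First, we enumerate all finitely generated $\orc$-recursively presented groups uniformly. For each $e\in\NN$, let $W_e^{\orc}$ denote the $e$-th $\orc$-r.e.\ set of natural numbers, interpreted as a set of words in a countable alphabet $x_1,x_2,\dots$. Fix $n$ and $e$, and let $G_{n,e}$ be the group with generators $x_{n,e,1},\dots,x_{n,e,n}$ and relators those words of $W_e^{\orc}$ that involve only the letters $x_1,\dots,x_n$, rewritten in the letters $x_{n,e,\cdot}$. We check that every finitely generated $\orc$-recursively presented group is isomorphic to some $G_{n,e}$. This uses the remark in Subsection~\ref{sec:oracle}: such a group has an $\orc$-r.e.\ presentation on any finite generating set, and that relator set is some $W_e^{\orc}$. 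The set of pairs $(e,w)$ with $w\in W_e^{\orc}$ is $\orc$-r.e., since we can dovetail the oracle machines. It follows that the free product
\[
G=\mathop{\ast}_{n,e}G_{n,e},
\]
on the countable generating set $\{x_{n,e,i}\}$, has an $\orc$-r.e.\ set of relators: we enumerate triples $(n,e,w)$, keep those with $w\in W_e^{\orc}$ that use only the first $n$ letters, and output the rewritten $w$. This set is clearly effective relative to $\orc$.

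Next, $G$ is countable and has an $\orc$-r.e.\ presentation. Lemma~\ref{lem:HNN} therefore embeds $G$ in a two-generator $\orc$-recursively presented group $G^*$, and we put $B_{1,\orc}=G^*$. Each factor $G_{n,e}$ embeds in the free product $G$, hence in $B_{1,\orc}$. So every finitely generated $\orc$-recursively presented group embeds in $B_{1,\orc}$. Finally, $B_{1,\orc}$ is finitely generated and hence of type $\FP{1}$, by the convention in Section~\ref{sec:prelim}.

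The main obstacle is the uniformity in the first step. We must make sure that a single oracle machine enumerates the relators of all the $G_{n,e}$ simultaneously; this is why we quantify over indices of $\orc$-r.e.\ sets rather than over the groups themselves. We expect this to go through by a standard dovetailing argument using the universal oracle Turing machine, and the remaining steps are then immediate from Lemma~\ref{lem:HNN}.
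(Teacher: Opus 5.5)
Your proof is correct and follows essentially the same route as the paper. You index the finitely generated $\orc$-recursively presented groups by a number of generators together with an $\orc$-r.e.\ relator set (your $W_e^{\orc}$, the paper's oracle machine $\mathcal{M}$), form their free product on disjoint alphabets, observe by dovetailing that it is $\orc$-recursively presented, and embed it in a two-generator group via Lemma~\ref{lem:HNN}.
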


\begin{proof}
For $k\ge1$ and an oracle Turing machine $\mathcal{M}$, let $G_{k,\mathcal{M}}$ be the group generated by $x_1,\dots,x_k$ subject to those outputs of $\mathcal{M}$ with oracle $\orc$ that are words in $x_1^{\pm1},\dots,x_k^{\pm1}$. Every finitely generated $\orc$-recursively presented group is isomorphic to some $G_{k,\mathcal{M}}$. Using disjoint alphabets and dovetailing the enumerations, the free product $\Omega_\orc$ of all these groups has an $\orc$-r.e.\ presentation and contains each of them. By Lemma~\ref{lem:HNN}, $\Omega_\orc$ embeds in a two-generator $\orc$-recursively presented group $B_{1,\orc}$. Finite generation is equivalent to type $\FP{1}$.
\end{proof}

\begin{theorem}\label{thm:oracle}
For every oracle $\orc\subseteq\NN$ and every integer $n\ge1$ there is an $\orc$-recursively presented group $B_{n,\orc}$ of type $\FP{n}$ that contains a copy of every finitely generated $\orc$-recursively presented group. These groups can be chosen so that $B_{1,\orc}\le B_{2,\orc}\le\cdots$.
\end{theorem}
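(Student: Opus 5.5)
We argue by induction on $n$. Each step takes $B_{n+1,\orc}$ to be an ascending HNN extension of $B_{n,\orc}$ along an endomorphism supplied by Corollary~\ref{cor:raising}. Throughout we maintain three properties of $B_{n,\orc}$: it is finitely generated and $\orc$-recursively presented; it is of type $\FP{n}$; and it contains a copy of every finitely generated $\orc$-recursively presented group. The base case $n=1$ is Lemma~\ref{lem:initial}.

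For the inductive step, put $B=B_{n,\orc}$ with $m=n\ge1$. By Lemma~\ref{lem:presentations}, $K_\Tr(B)$ is finitely generated and $\orc$-recursively presented, so by the third property there is an embedding $\kappa\colon K_\Tr(B)\to B$. Set $\alpha_v=\kappa\circ\iota_v$ for $v\in V$. Since $\iota_i(B)$ and $\iota_j(B)$ commute and $g_k=g_ig_j$ in $K_\Tr(B)$ for every triple $(i,j)\to k$, applying the homomorphism $\kappa$ shows that the family $(\alpha_v)$ satisfies the hypothesis of Corollary~\ref{cor:row}. By Proposition~\ref{prop:certificate}, the fixed system $\Tr$ has girth at least $12$, so Lemma~\ref{lem:injective} makes $\iota_0$ injective, and hence $\alpha_0$ is injective. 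Corollary~\ref{cor:raising} then shows that
\[
B_{n+1,\orc}=\langle B,t\colon t^{-1}bt=\alpha_0(b),\ b\in B\rangle
\]
contains $B$ and is of type $\FP{n+1}$.

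It remains to check the other two properties for $B_{n+1,\orc}$. The new group is generated by the finite generating set of $B$ together with $t$, so it is finitely generated. By the last paragraph of Subsection~\ref{sec:oracle}, it is $\orc$-recursively presented: one adds the finitely many relators $t^{-1}xt=w_x$ for the generators $x$ of $B$. It contains $B$, and $B$ contains every finitely generated $\orc$-recursively presented group, so universality is inherited. The chain $B_{1,\orc}\le B_{2,\orc}\le\cdots$ holds by construction, via the Britton embeddings.

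The main content lies in the earlier results (Lemma~\ref{lem:product}, Lemma~\ref{lem:injective}, Theorem~\ref{thm:raising}). At this stage the only point needing care is that universality is what produces the embedding $\kappa$. This requires $K_\Tr(B)$ to be finitely generated and $\orc$-recursively presented, which is exactly why these properties are carried along in the induction. We also need the oracle to stay fixed throughout, which it does because each step adds only finitely many relators.
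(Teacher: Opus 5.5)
Your proposal is correct and follows the paper's proof essentially step for step. You use the base case from Lemma~\ref{lem:initial}, embed $K_\Tr(B)$ into $B$ by universality, set $\alpha_v=\kappa\circ\iota_v$, get injectivity of $\alpha_0$ from Lemma~\ref{lem:injective}, and conclude with Corollary~\ref{cor:raising} and the oracle bookkeeping of Subsection~\ref{sec:oracle}. Your explicit tracking of finite generation is a harmless elaboration of what the paper leaves implicit in type $\FP{m}$ with $m\ge1$.
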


\begin{proof}
The group $B_{1,\orc}$ is given by Lemma~\ref{lem:initial}. Suppose that $B=B_{m,\orc}$ has been constructed for some $m\ge1$. By Lemma~\ref{lem:presentations}, $K_\Tr(B)$ is finitely generated and $\orc$-recursively presented, so there is an embedding $\kappa\colon K_\Tr(B)\to B$. Put $\alpha_v=\kappa\circ\iota_v$ for $v\in V$. The relations~\eqref{eq:KSigma} show that the family $(\alpha_v)$ satisfies the hypothesis of Corollary~\ref{cor:row}, and $\alpha_0$ is injective by Lemma~\ref{lem:injective}. By Corollary~\ref{cor:raising},
\[
B_{m+1,\orc}=\langle B,\,t_m\colon t_m^{-1}bt_m=\alpha_0(b),\ b\in B\rangle
\]
is of type $\FP{m+1}$ and contains $B$, hence every finitely generated $\orc$-recursively presented group. It is $\orc$-recursively presented by Subsection~\ref{sec:oracle}.
\end{proof}

\begin{proof}[Proof of Theorem~\ref{thm:A}]
Let $G$ be a countable group and choose $\orc$ such that $G$ has an $\orc$-r.e.\ presentation (see Subsection~\ref{sec:oracle}). By Lemma~\ref{lem:HNN}, $G$ embeds in a finitely generated $\orc$-recursively presented group, hence in $B_{1,\orc}\le B_{n,\orc}$, which is of type $\FP{n}$ by Theorem~\ref{thm:oracle}.
\end{proof}

\begin{remark}\label{rem:infinity}
For $n=0$ every group is of type $\FP{0}$, and for $n=1$ the required embedding follows from Lemma~\ref{lem:HNN}. The group obtained depends on $G$ and on $n$, and the argument does not produce a group of type $\FP{\infty}$ containing $G$. Indeed, the union of the chain $B_{1,\orc}\le B_{2,\orc}\le\cdots$ is not finitely generated: the height of $B_{m+1,\orc}$ vanishes on $B_{m,\orc}$ and takes the value $1$ at $t_m$, so each inclusion is proper, and a finite subset of the union lies in some $B_{m,\orc}$. The questions of~\cite{FFZ} for types $\Ft{\infty}$ and $\FP{\infty}$ remain open.
\end{remark}

\subsection{Homotopical finiteness}\label{sec:B}
We now turn to Theorem~\ref{thm:B} and its corollary. We use the following classical result attributed to Wall~\cite{Wall}; see~\cite[VIII.7]{Bro}. A short proof is given in~\cite[Lemma~2.1]{FFZ}.
\begin{lemma}\label{lem:Fn}
A finitely presented group of type $\FP{n}$ is of type $\Ft{n}$.\qed
\end{lemma}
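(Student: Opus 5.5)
The plan is to combine the cellular model of an Eilenberg--Mac\,Lane space with a Hurewicz/Whitehead argument on the universal cover. Let $G$ be finitely presented of type $\FP{n}$, with $n\ge2$; the case $n\le 2$ is immediate, since finite presentability is $\Ft{2}$. We build inductively, for $2\le k\le n$, a $K(G,1)$ whose $k$-skeleton is finite. Start from the finite presentation $2$-complex $X^{(2)}$ and attach cells of dimension $\ge3$ to kill the higher homotopy. Then show by induction on $k$ that these cells can be chosen so that only finitely many of them have dimension $\le k$.

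Suppose $X$ is a $K(G,1)$ whose $(k-1)$-skeleton $Y=X^{(k-1)}$ is finite, where $3\le k\le n$. Write $\widetilde Y$ for its universal cover. The cellular chain complex $C_*(\widetilde Y)$ is a complex of finitely generated free $\ZZ G$-modules, and it is exact in degrees $1,\dots,k-2$ as a resolution of $\ZZ$ up to degree $k-2$, because $\pi_i(Y)=\pi_i(X)=0$ for $2\le i\le k-2$. Since $G$ is of type $\FP{n}$ and $k-1<n$, the criterion quoted from \cite[VIII.4.3]{Bro} (applied with $m=k-1$) shows that $Z_{k-1}=\ker\bigl(C_{k-1}(\widetilde Y)\to C_{k-2}(\widetilde Y)\bigr)=H_{k-1}(\widetilde Y)$ is finitely generated over $\ZZ G$.

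Now $\widetilde Y$ is $(k-2)$-connected. By Hurewicz (for $k-1\ge2$, using simple connectivity of $\widetilde Y$), $\pi_{k-1}(Y)=\pi_{k-1}(\widetilde Y)\cong H_{k-1}(\widetilde Y)$ as $\ZZ G$-modules. Choose finitely many maps $S^{k-1}\to Y$ representing $\ZZ G$-generators, and attach $k$-cells along them to get a finite complex $Y'$ with $\pi_{k-1}(Y')=0$. This uses the standard fact that attaching cells along $\ZZ G$-generators kills $\pi_{k-1}$: $H_{k-1}(\widetilde{Y'})=Z_{k-1}/\text{image}=0$, and Hurewicz applies again. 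Also $\pi_i(Y')=\pi_i(Y)$ for $i<k-1$. Then attach cells of dimension $\ge k+1$ to kill all higher homotopy, which gives a $K(G,1)$ with finite $k$-skeleton $Y'$. Iterating up to $k=n$ gives type $\Ft{n}$.

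The main obstacle is bookkeeping rather than substance. One must check the identification $H_{k-1}(\widetilde Y)=\ker d_{k-1}$ for a $(k-1)$-dimensional complex, which holds because there are no $k$-cells. One must also check that the partial resolution $C_*(\widetilde Y)$ is admissible in Brown's criterion, which applies to any finite free partial resolution by the Schanuel-type independence. These are exactly the points handled in \cite[VIII.7]{Bro} and \cite[Lemma~2.1]{FFZ}, so the lemma is cited there.
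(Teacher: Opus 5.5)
Your proof is correct and follows the classical argument of Wall and Brown (Chapter VIII.7, and Fournier-Facio--Zaremsky, Lemma~2.1), which the paper cites rather than reproves. At each stage, Brown's criterion VIII.4.3 together with Hurewicz on the universal cover shows that $\pi_{k-1}$ of the finite $(k-1)$-skeleton is a finitely generated $\ZZ G$-module, so finitely many $k$-cells suffice to kill it.
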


\begin{proof}[Proof of Theorem~\ref{thm:B}]
Let $\Omega_\emptyset$ be the recursively presented free product constructed in Lemma~\ref{lem:initial}. By Lemma~\ref{lem:HNN} and Higman's embedding theorem~\cite{Hig}, it embeds in a finitely presented group $U_2$. Every countable recursively presented group embeds, by Lemma~\ref{lem:HNN}, in a finitely generated recursively presented group, and the latter is isomorphic to a free factor of $\Omega_\emptyset$. Thus $U_2$ contains a copy of every countable recursively presented group.

Suppose that $U_m$ is finitely presented, of type $\Ft{m}$, and contains $U_2$. In particular, it contains a copy of every finitely presented group. By Lemma~\ref{lem:presentations}, $K_\Tr(U_m)$ is finitely presented and hence embeds in $U_m$. As in the proof of Theorem~\ref{thm:oracle}, composing this embedding with the maps $\iota_v$ gives endomorphisms satisfying the hypotheses of Corollary~\ref{cor:raising}, and the resulting ascending HNN extension $U_{m+1}$ is of type $\FP{m+1}$. It is finitely presented, being obtained from a finite presentation of $U_m$ by adding one generator and finitely many relators, so it is of type $\Ft{m+1}$ by Lemma~\ref{lem:Fn}. It contains $U_m$, hence $U_2$, hence a copy of every countable recursively presented group.
\end{proof}

\begin{proof}[Proof of Corollary~\ref{cor:C}]
A group of type $\Ft{n}$ with $n\ge2$ is finitely presented, so every subgroup of such a group embeds in a finitely presented group. Conversely, if a countable group $G$ embeds in a finitely presented group $P$, then Theorem~\ref{thm:B} embeds $P$, and hence $G$, in a group of type $\Ft{n}$. For the final assertion, a finitely generated subgroup of a finitely presented group is recursively presented: choose words for its generators and enumerate the relations among them that follow from the finite presentation. The converse is Higman's embedding theorem~\cite{Hig}.
\end{proof}
  
\subsection{Two-generator versions}
The finite-generator version of Lemma~\ref{lem:HNN} preserves the finiteness properties considered here.

\begin{lemma}\label{lem:two-generators}
Let $n\ge2$. Every finitely generated group of type $\FP{n}$ embeds in a two-generator group of type $\FP{n}$. If the original group is of type $\Ft{n}$, the two-generator group can also be chosen of type $\Ft{n}$.
\end{lemma}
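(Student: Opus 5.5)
Let $G$ be finitely generated of type $\FP{n}$, with generators $x_1,\dots,x_k$. We use the finite version of the construction in Lemma~\ref{lem:HNN}. Let $L=\langle a,b\rangle$ be free, and put $u_0=b$, $u_i=a^{-i}ba^i$ and $v_0=a$, $v_i=x_ib^{-i}ab^i$ for $1\le i\le k$. As in the proof of Lemma~\ref{lem:HNN}, both $U=\langle u_0,\dots,u_k\rangle$ and $V=\langle v_0,\dots,v_k\rangle$ are free of rank $k+1$ on these elements, so
\[
G^*=\langle G*L,\,t\colon t^{-1}u_it=v_i,\ 0\le i\le k\rangle
\]
is an HNN extension of $G*L$ with finitely generated free associated subgroups. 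It contains $G$, and it is generated by $b$ and $t$, since $a=t^{-1}bt$ and each $x_i$ is a word in $a,b,t$.

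It remains to check the finiteness properties, which I would do with the standard Mayer--Vietoris argument for graphs of groups.
\begin{enumerate}[label=(\roman*)]
\item Free products and HNN extensions are fundamental groups of graphs of groups. If the vertex groups are of type $\FP{n}$ and the edge groups are of type $\FP{n-1}$, the fundamental group is of type $\FP{n}$; see~\cite[VIII.5]{Bro}, or use Bieri's criterion. I would verify this through the short exact sequence of $\ZZ G^*$-modules
\[
0\to\textstyle\bigoplus_{\text{edges}}\ZZ[G^*/G_e]\to\bigoplus_{\text{vertices}}\ZZ[G^*/G_v]\to\ZZ\to0
\]
coming from the Bass--Serre tree. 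Induction preserves $\FP{}$-type of coset modules, and the long exact sequence argument of~\cite[VIII.4]{Bro} then gives the claim.
\item $G*L$ is of type $\FP{n}$, since $G$ and $L$ are; the edge group is trivial.
\item $G^*$ is an HNN extension of a group of type $\FP{n}$ over a free group of finite rank, which is of type $\FP{\infty}$. Hence $G^*$ is of type $\FP{n}$.
\end{enumerate}

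For the homotopical statement, the same graph-of-groups decomposition gives an Eilenberg--Mac\,Lane space as a graph of spaces. Take $K(G,1)$ with finite $n$-skeleton, take a wedge of circles for $L$, and for the edge group take a finite graph $K(F_{k+1},1)$. The edge maps can be realised cellularly, and the cylinder $K(F_{k+1},1)\times[0,1]$ is glued in. The result is aspherical by Whitehead's theorem for graphs of aspherical spaces with $\pi_1$-injective edge maps, and its $n$-skeleton is finite. Alternatively, $G^*$ is finitely presented, since it is obtained from a finite presentation of $G*L$ by adding one generator and $k+1$ relators, and it is of type $\FP{n}$; so Lemma~\ref{lem:Fn} applies directly.

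The main obstacle is step (i): quoting the graph-of-groups finiteness criterion in exactly the needed form. The edge groups here are finitely generated free, so the criterion holds with room to spare, and one can avoid generality by working with the two explicit short exact sequences above together with~\cite[VIII.4.3]{Bro} on extensions of modules of type $\FP{n}$. The embedding and the two-generation are routine and are inherited from Lemma~\ref{lem:HNN}.
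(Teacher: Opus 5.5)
Your proposal is correct and follows essentially the paper's proof: the same finite-index version of the construction of Lemma~\ref{lem:HNN}, the same criterion (vertex groups of type $\FP{n}$, edge groups of type $\FP{n-1}$) applied first to $G*L$ and then to the HNN extension with finitely generated free edge group, and, for type $\Ft{n}$, the same finite-presentability-plus-Lemma~\ref{lem:Fn} argument (your graph-of-spaces alternative also works). The only difference is in justifying the criterion: the paper builds a mapping cone of induced resolutions, whereas you use the short exact sequence of coset modules together with the extension properties of modules of type $\FP{n}$.
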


\begin{proof}
Let $G=\langle x_1,\dots,x_k\rangle$, with $k\ge1$, and use the construction of Lemma~\ref{lem:HNN} with only the indices $0,\dots,k$. The subgroups generated by $u_0,\dots,u_k$ and by $v_0,\dots,v_k$ are free of rank $k+1$. Thus
\[
J=\langle G*\langle a,b\rangle,\,t\colon t^{-1}u_it=v_i,\ 0\le i\le k\rangle
\]
contains $G$ by Britton's lemma and is generated by $b,t$, by the same elimination formulas as before.

We recall why the finite-rank edge groups preserve type $\FP{n}$. For a group $\Gamma$ that is the fundamental group of a finite graph of groups, the augmented cellular chain sequence of its Bass--Serre tree is
\[
0\to\bigoplus_e\ZZ\Gamma\otimes_{\ZZ G_e}\ZZ
\to\bigoplus_v\ZZ\Gamma\otimes_{\ZZ G_v}\ZZ
\to\ZZ\to0,
\]
where one edge is chosen from each unoriented edge orbit. Induce free resolutions of $\ZZ$ over the vertex and edge groups to $\ZZ\Gamma$, lift the first map in this sequence to a chain map, and take its mapping cone. Exactness of induction and the homology sequence of the cone show that this is a free resolution of $\ZZ$. Its degree-$j$ term is the sum of the induced degree-$j$ vertex terms and degree-$(j-1)$ edge terms. Hence $\Gamma$ is of type $\FP{n}$ if its vertex groups are of type $\FP{n}$ and its edge groups are of type $\FP{n-1}$.

Apply this first to the free product $G*\langle a,b\rangle$, whose edge group is trivial, and then to the HNN extension defining $J$, whose edge group is free of finite rank. This proves that $J$ is of type $\FP{n}$. If $G$ is of type $\Ft{n}$, it is finitely presented, and so is $J$: the construction adds finitely many generators and relators. Lemma~\ref{lem:Fn} then shows that $J$ is of type $\Ft{n}$.
\end{proof}

\begin{corollary}\label{cor:two-generators}
Let $n\ge2$. Every countable group embeds in a two-generator group of type $\FP{n}$. Moreover, there is a two-generator group of type $\Ft{n}$ containing a copy of every countable recursively presented group.
\end{corollary}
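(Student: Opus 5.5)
Both assertions follow by combining the earlier embedding results with Lemma~\ref{lem:two-generators}.

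For the first assertion, let $G$ be countable. By Theorem~\ref{thm:A}, $G$ embeds in a group $P$ of type $\FP{n}$. Since $n\ge 2\ge 1$, $P$ is of type $\FP{1}$ and hence finitely generated. Lemma~\ref{lem:two-generators} embeds $P$ in a two-generator group of type $\FP{n}$, and composing the two embeddings gives the claim. If the group of Theorem~\ref{thm:A} happened to be trivial, then $G$ is trivial and any two-generator group of type $\FP{n}$ works. The construction in Lemma~\ref{lem:two-generators} also handles this case, since $k\ge1$ can be arranged by repeating a generator.

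For the second assertion, take the group $U$ of type $\Ft{n}$ from Theorem~\ref{thm:B}, namely $U_n$ in its proof, which contains a copy of every countable recursively presented group. The proof of Theorem~\ref{thm:B} embeds each such group via Lemma~\ref{lem:HNN} and the free factors of $\Omega_\emptyset$, so ``every recursively presented group'' there includes all countable ones. Because $n\ge2$, $U$ is finitely presented and in particular finitely generated. The $\Ft{n}$ clause of Lemma~\ref{lem:two-generators} then embeds $U$ in a two-generator group of type $\Ft{n}$. That group contains $U$, and hence a copy of every countable recursively presented group.

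No step is a real obstacle. The only points to check are that the target groups are finitely generated, so that Lemma~\ref{lem:two-generators} applies, and that the universality statement of Theorem~\ref{thm:B} covers countable and not merely finitely generated recursively presented groups. The proof of Theorem~\ref{thm:B} states the countable version explicitly.
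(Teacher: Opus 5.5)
Your proposal is correct and matches the paper's proof. The groups supplied by Theorems~\ref{thm:A} and~\ref{thm:B} are finitely generated because $n\ge2$, and Lemma~\ref{lem:two-generators}, using its $\Ft{n}$ clause for the second assertion, embeds them in two-generator groups of the same type; your two extra checks (countable universality, and the harmless trivial-group case) are consistent with the paper.
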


\begin{proof}
Apply Lemma~\ref{lem:two-generators} to the groups supplied by Theorems~\ref{thm:A} and~\ref{thm:B}, which are finitely generated because $n\ge2$.
\end{proof}


\end{document}